\documentclass[reqno]{amsart}
\usepackage{amssymb,amsmath,amsthm,amsfonts,fancyhdr}
\usepackage{indentfirst}
\usepackage{mathrsfs}
\usepackage{setspace}
\usepackage{color}
\usepackage{cite}
\usepackage{bm}
\usepackage{lineno}
\usepackage{hyperref}
\usepackage{latexsym}
\usepackage{verbatim}
\usepackage{cases}
\usepackage{esint}
\theoremstyle{plain} 
\newtheorem{thm}{Theorem}[section]
\newtheorem*{thmA}{Theorem A}
\newtheorem*{thmB}{Theorem B}
\newtheorem{pro}[thm]{Proposition}
\newtheorem{lem}[thm]{Lemma}
\newtheorem{cor}[thm]{Corollary}
\theoremstyle{definition}
\newtheorem{defn}{Definition}
\theoremstyle{remark}
\newtheorem{rmk}[thm]{Remark}

\renewcommand{\det}{\mbox{det}}

  \numberwithin{equation}{section}
  \numberwithin{figure}{section}

\begin{document}

\title{\textbf{Liouville theorem for  fully nonlinear elliptic equations with the small  oscillation and the periodicity in $x$  and the periodic right hand term}}

\author{Lichun Liang}

\address{School of Mathematical
Sciences, Chongqing Normal University,
Chongqing, 401331, P.R. China}

\email{lianglichun@cqnu.edu.cn}

\begin{abstract}
In this paper, we study quadratic growth solutions $u$ of fully nonlinear elliptic equations of the form $F(D^2u,x)=f$ in $\mathbb{R}^n$, where $f$ is periodic and $F$ has the periodicity in $x$. Under the assumption that the oscillation of  $F(M,x)$ in $x$ is ``small",
we establish the existence and Liouville type results for quadratic  growth solutions, which can be expressed into the sum of a quadratic polynomial and a periodic function. Consequently, these results  are generalization of the existing results for  linear elliptic equations $a_{ij}D_{ij}u=0$ and   fully nonlinear elliptic equations $F(D^2u)=f$ with the periodic data.
\end{abstract}


\keywords{Fully Nonlinear Elliptic Equation; Quadratic growth solution; Periodic Datum}
\date{}
\maketitle

\section{Introduction}

\noindent

In this paper, we are concerned with quadratic growth solutions of fully nonlinear  elliptic equations of the form
\begin{equation}\label{eq1}
F(D^2u(x),x)=f(x),\ \ \ x\in \mathbb{R}^n,
\end{equation}
where $f$ is continuous and periodic, i.e.,
$$f(x+z)=f(x)$$
for all $x\in \mathbb{R}^n$ and $z\in \mathbb{Z}^n$.
$F(M,x)$ is a real valued  continuous  function defined on $\mathcal{S}^{n\times n}\times \mathbb{R}^n$, where $\mathcal{S}^{n\times n}$ is the space of all real $n\times n$ symmetric matrices. We assume that the operator  $F$ satisfies the following structure  conditions:

\textbf{(H1):}(Uniformly  ellipticity) There are  two  constants $0<\lambda \leq \Lambda < \infty$ such that
$$\lambda \|N\|\leq F(M+N,x)-F(M,x)\leq \Lambda\|N\|$$
for any $M, N\in \mathcal{S}^{n\times n}$ with $N\geq 0$ and $x\in \mathbb{R}^n$.


\textbf{(H2):} (Periodicity  in $x$) $$F(M,x+z)=F(M, x)$$
for all $x\in \mathbb{R}^n$, $z\in \mathbb{Z}^n$ and $M\in \mathcal{S}^{n\times n}$.

\textbf{(H3):} (Concavity in $M$)

$$\frac{1}{2}F(M,x)+\frac{1}{2}F(N,x) \leq F\left(\frac{1}{2}M+\frac{1}{2}N,x\right)$$
for all $x\in \mathbb{R}^n$ and $M, N\in \mathcal{S}^{n\times n}$.

In order to measure the oscillation of $F$ in the variable $x$, we consider the function
$$\beta(x,x_0)=\sup_{M\in \mathcal{S}^{n\times n}\setminus \{0\}}\frac{|F(M,x)-F(M,x_0)|}{\|M\|}.$$
Clearly, due to (H2), $\beta(x,x_0)$ is periodic in $x$ and $x_0$. We also introduce Pucci's extremal operators, i.e.,
$$\mathcal{M}^{+}(M)=\Lambda\sum_{\kappa_i>0} \kappa_i+\lambda \sum_{\kappa_i<0} \kappa_i\ \ \ \mbox{and}\ \ \ \mathcal{M}^{-}(M)=\lambda\sum_{\kappa_i>0} \kappa_i+\Lambda \sum_{\kappa_i<0} \kappa_i,$$
where $\kappa_i=\kappa_i(M) (i=1,\ldots,n)$ are the eigenvalues of $M\in \mathcal{S}^{n\times n}$.

In order to expound  the main motivations on this work, we would like to confine our attention to the development of Liouville type results for elliptic equations with the periodic data in the whole space.   These Liouville-type results are intimately connected with the study  of  minimal solutions of variational problems on a torus\cite{MS92,M86},  the dimension of the space of polynomial growth solutions of degree at most $d$\cite{L9,LW00,LW}, as well as the existence of correctors and the limiting equation in homogenization theory\cite{Ev92,C99}.
In  1989, by implementing  the tools from homogenization theory, Avellaneda and Lin \cite{AL}  first obtained a Liouville type result for  linear elliptic equations of divergence form
$\partial_i(a_{ij}(x)\partial_ju(x))=0$ in $\mathbb{R}^n$ with the periodic data.
Under the hypothesis that the coefficients $a_{ij}(x)$ are Lipschitz continuous and periodic, they showed that any polynomial growth solution of degree of at most $m$ must be a polynomial  with  periodic coefficients.  A few years later, Moser and Struwe \cite{MS92}considered quasilinear elliptic equations $-div (F_{p}(x,Du(x)))=0$ in $\mathbb{R}^n$, where $F(x,p)$ is periodic in $x$ and satisfies  convexity and suitable growth assumptions with respect to $p$. Using the Harnack inequality from the elliptic equation theory, they showed that any linear growth solution must be a linear function up to a periodic perturbation, which partially generalizes Avellaneda and Lin's results from the linear to the nonlinear case. Moreover, they also  achieved a simplified proof for the linear case without the Lipschitz continuous assumption on the coefficients.  For linear elliptic equations of non-divergence form $a_{ij}(x)D_{ij}u(x)=0$ in $\mathbb{R}^n$ with measurable and periodic coefficients, Li and Wang \cite{LW}  proved a result similar to that in \cite{AL}.  For Monge-Amp\`{e}re equations $\det (D^2u(x))=f(x)$ in $\mathbb{R}^n$ with $f$ being periodic, Li \cite{LL90} first established the existence result for entire convex quadratic polynomial growth solutions decomposed as a quadratic polynomial plus a periodic function. After ten years,  Caffarelli and Li \cite{CL04}  showed that any convex solution $u$ of $\det(D^2u(x))=f(x)$ in  $\mathbb{R}^n$ must be a quadratic polynomial  up to a periodic perturbation under the condition that $f$ is periodic and smooth. Recently, the smooth assumption on $f$ has been weakened into $f\in L^{\infty}(\mathbb{R}^n)$ by Li and Lu\cite{LL22}.
Considering fully nonlinear uniformlly elliptic equations $F(D^2u)=f$ in  $\mathbb{R}^n$ with the periodic hand term $f$, Li and Liang \cite{LL2024} established the same  Liouville type result as  Monge-Amp\`{e}re equations under the condition that the solution grows no faster than $|x|^2$ at infinity.

To clearly highlight the distinctive contributions of this paper, we would like to mention two important Liouville-type theorems.
One of them is related to  linear non-divergence  elliptic equations with periodic coefficients, while the other one deals with fully nonlinear uniformlly elliptic equations with the periodic hand term.
\bigskip
\begin{thmA}[Li-Wang \cite{LW}]
Let $a_{ij}$ be periodic and measurable function satisfying
\begin{equation*}
  \bar{\lambda} |\xi|^2\leq a_{ij}(x) \xi_i \xi_j\leq \bar{\Lambda} |\xi|^2
\end{equation*}
for any $x, \xi \in \mathbb{R}^n$, where $0<\bar{\lambda}\leq \bar{\Lambda} < \infty$. Assume that  $u\in W^{2,n}_{loc}(\mathbb{R}^n)$ is a strong solution of
$$a_{ij}D_{ij}u=0\ \ \mbox{in}\ \ \mathbb{R}^n$$
and satisfies
\begin{equation*}
   |u(x)|\leq C(1+|x|^2),\ \ \ x\in \mathbb{R}^n
 \end{equation*}
 for some constant $C>0$. Then there exist some $A\in \mathcal{S}^{n\times n}$, $b\in \mathbb{R}^n$ and periodic function $v\in C(\mathbb{R}^n)$ such that
 $$u(x)=\frac{1}{2}x^{T}Ax+b\cdot x+v(x).$$
 Moreover, if the coefficients $a_{ij}$ are  continuous, then there exists some positive definite  matrix $Q \in \mathcal{S}^{n\times n}$ such that
 $$tr (AQ)=0.$$
\end{thmA}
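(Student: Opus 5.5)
The approach is the standard Moser--Struwe / Li--Wang scheme for periodic Liouville problems, carried out at the level of second differences.

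\textbf{Second differences.} For each $z\in\mathbb{Z}^n$, periodicity of $a_{ij}$ shows that $u(\cdot+z)$ solves the same equation. Hence so does the second difference
$$w_z(x)=u(x+z)+u(x-z)-2u(x).$$
From $|u(x)|\le C(1+|x|^2)$ we only get $|w_z(x)|\le C(1+|x|^2+|z|^2)$, so $w_z$ is not yet known to be bounded.

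\textbf{Key improvement of growth.} I would show that the growth exponent of $u$ improves, using the Krylov--Safonov Harnack inequality and $C^{\alpha}$ estimate (valid for measurable coefficients and $W^{2,n}_{loc}$ strong solutions). Rescale $u_R(y)=R^{-2}u(Ry)$. This solves $a_{ij}(Ry)D_{ij}u_R=0$ with ellipticity constants unchanged and is bounded on $B_1$. The first differences $u(x+z)-u(x)$ are again solutions, and $C^{\alpha}$ estimates give
$$|u(x+z)-u(x)|\le C|z|^{\alpha}R^{2-\alpha}\quad\text{on } B_R.$$
Iterating this, in the style of Moser--Struwe, by applying the interior $C^{1,\alpha}$-type improvement along integer lattice directions, I aim to prove $\sup_{B_R}|w_z|\le C(z)$ independently of $R$. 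Concretely, the oscillation of $w_z$ on $B_R$ should decay like $R^{-\alpha}\cdot\sup_{B_{2R}}|w_z|$. Bootstrapping from quadratic growth down in steps of $\alpha$ eventually yields a bounded solution. A bounded entire solution is constant by the Liouville theorem from Krylov--Safonov. This is the step I expect to be the main obstacle: Krylov--Safonov gives only $C^{\alpha}$, not $C^{1,\alpha}$. The remedy I would try first is to work throughout with differences of differences along lattice vectors, so that only $C^{\alpha}$ estimates are ever used, each difference gaining a factor $R^{-\alpha}$.

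\textbf{Recovering the quadratic polynomial.} Once $w_z\equiv c(z)$ is constant, the map $z\mapsto c(z)$ is quadratic on $\mathbb{Z}^n$. Indeed, constancy of second differences gives that the first differences $u(x+z)-u(x)$ are linear in $x$ along lattice translations. Standard arguments then produce $A,b$ with
$$c(z)=z^TAz,$$
and I set
$$v=u-\tfrac12x^TAx-b\cdot x.$$
Then $v(x+z)-v(x)$ is independent of $x$ and additive in $z$, hence equal to $\ell\cdot z$ for some $\ell\in\mathbb{R}^n$. Absorbing $\ell$ into $b$ makes $v$ periodic, and $v$ is continuous.

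\textbf{The trace condition.} When $a_{ij}$ is continuous, I would take the adjoint problem
$$D_{ij}(a_{ij}m)=0\ \text{on the torus},\qquad m>0,\quad \int m=1,$$
whose solution exists by the theory of invariant measures, e.g. Bensoussan--Lions--Papanicolaou. Set $Q=\int a\,m$, which is positive definite. The equation for $u$ reads
$$a_{ij}A_{ij}+a_{ij}D_{ij}v=0.$$
Multiplying by $m$ and integrating over the torus kills the $v$-term in the weak sense, leaving $\mathrm{tr}(AQ)=0$. Justifying this integration for $v\in W^{2,n}$ is handled by the continuity of $a_{ij}$, which gives $W^{2,p}$ regularity of $v$.
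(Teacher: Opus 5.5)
The paper gives no proof of Theorem A; it quotes it from Li--Wang. Its remark that $Q$ is the average of $a_{ij}m$, with $D_{ij}(a_{ij}m)=0$, agrees with your trace step, which is fine. So your argument has to stand on its own, and it has a genuine gap at exactly the step you flag. Write $\Delta_z g=g(\cdot+z)-g$. Iterating the Krylov--Safonov $C^{\alpha}$ estimate over lattice differences gives $|\Delta_{z_1}\cdots\Delta_{z_k}u(x)|\le C(1+|x|^{2-k\alpha})$. For $k=2$ this is only $|w_z(x)|\le C(1+|x|^{2-2\alpha})$. Since $\alpha=\alpha(n,\bar\lambda,\bar\Lambda)$ can be arbitrarily small, nothing in your bootstrapping makes $w_z$ bounded. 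What becomes bounded, and hence constant, is a $k$-fold difference with $k>2/\alpha$. You then jump to ``$w_z\equiv c(z)$'' with no way back down from order $k$ to order two. Your claimed decay of the oscillation of $w_z$ on $B_R$ by $R^{-\alpha}\sup_{B_{2R}}|w_z|$ does not follow from the H\"{o}lder estimate. That estimate controls increments over a fixed lattice step, i.e.\ one more difference, not the oscillation over all of $B_R$. The claim also fails for general solutions, e.g.\ $w=x_1$ for the Laplacian.

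The missing idea is to use the quadratic growth a second time, on lattice cosets. Once all $k$-fold lattice differences are constant, all $(k+1)$-fold ones vanish. Hence, for each $x_0$, Newton's formula shows that $z\mapsto u(x_0+z)$ on $\mathbb{Z}^n$ is the restriction of a polynomial $P_{x_0}$ of degree at most $k$. Since $|P_{x_0}(z)|\le C(x_0)(1+|z|^2)$ on $\mathbb{Z}^n$, restricting to $z=ty$ with $t\in\mathbb{Z}$ shows that every homogeneous part of degree at least $3$ vanishes on $\mathbb{Z}^n$, hence identically. Thus all third lattice differences of $u$ vanish. So $w_z$ is a continuous $\mathbb{Z}^n$-periodic solution, hence bounded, hence constant by the Krylov--Safonov Liouville theorem.

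You need the same Liouville step once more later. The algebra only shows that $v(x+z)-v(x)$ is $\mathbb{Z}^n$-periodic in $x$; for example, $x_2\sin(2\pi x_1)$ has vanishing second lattice differences but non-constant first ones. It is constant only because it also solves $a_{ij}D_{ij}(\cdot)=0$, which holds since $\frac12(x+z)^TA(x+z)-\frac12x^TAx$ is affine in $x$. With these two additions, the rest of your argument goes through, including recovering $A$, absorbing $\ell$ into $b$, and the trace identity obtained by pairing the invariant density $m\in L^{p'}$ with $v\in W^{2,p}$.
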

\bigskip

\begin{thmB}[Li-Liang \cite{LL2024}]
 Let $F\in C(\mathcal{S}^{n\times n})$   be   uniformly elliptic and  $f\in  C(\mathbb{R}^n)$ be periodic.
 Assume that $u$ is a viscosity solution of
 $$F(D^2u)=f\ \ \mbox{in}\ \ \mathbb{R}^n$$ and satisfies
 \begin{equation*}
   |u(x)|\leq C(1+|x|^2),\ \ \ x\in \mathbb{R}^n
 \end{equation*}
 for some constant $C>0$. Then, if either $n\geq 3$ and $F$ is concave (convex) or $n=2$,
 there exist some $A\in \mathcal{S}^{n\times n}$, $b\in \mathbb{R}^n$ and periodic function $v\in C(\mathbb{R}^n)$ such that $$u(x)=\frac{1}{2}x^{T}Ax+b\cdot x+v(x).$$
 Moreover, there exists some uniformly elliptic operator $\bar{F}\in C(\mathcal{S}^{n\times n})$ such that
 $$\bar{F}(A)=0.$$
 \end{thmB}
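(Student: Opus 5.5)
We prove Theorem B, the Liouville statement for $F(D^2u)=f$ with $F$ independent of $x$ and $f$ periodic, in two stages: first we show that $D^2u$ is asymptotically constant, and then we use periodicity to peel off the quadratic part.

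\textbf{Step 1: differences are bounded.} For each $z\in\mathbb{Z}^n$, $u(\cdot+z)$ solves the same equation, since $f$ is periodic. Hence the second difference
$$w_z(x)=u(x+z)+u(x-z)-2u(x)$$
satisfies, by concavity of $F$ (in the sense of viscosity inequalities, via Jensen–Ishii), $\mathcal{M}^{+}(D^2w_z)\ge 0$ in the case $F$ concave. In general the first difference $u(\cdot+z)-u$ solves
$$\mathcal{M}^-\le 0\le\mathcal{M}^+.$$
The quadratic growth of $u$ together with interior $C^{1,\alpha}$ estimates (Caffarelli) applied to $u_R(y)=R^{-2}u(Ry)$ gives $|u(x+z)-u(x)|\le C(1+|x|)|z|$.

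\textbf{Step 2: the blow-down is quadratic.} The rescalings $u_R$ are uniformly bounded in $C^{1,\alpha}_{loc}$ and solve
$$F(D^2u_R)=f(R\,\cdot).$$
By homogenization for fully nonlinear equations (Evans, Caffarelli–Souganidis), any limit $u_\infty$ solves $\bar F(D^2u_\infty)=0$ with $\bar F$ uniformly elliptic and concave (for $n=2$, concavity is not needed). Since $u_\infty$ has quadratic growth, the Evans–Krylov Liouville theorem, or the Nirenberg two-dimensional theory when $n=2$, shows that $u_\infty$ is a quadratic polynomial $\frac12x^TAx$.

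\textbf{Step 3: $A$ is independent of the scale.} Here concavity enters again. Set $P(x)=\frac12 x^TAx$ and let $\bar v$ be the periodic corrector solving
$$F(A+D^2\bar v)=f,$$
which exists by the cell problem that defines $\bar F$. Then $U=P+\bar v$ is an entire solution. The difference $u-U$ solves a linear uniformly elliptic equation with measurable coefficients, obtained either from concavity or by the standard viscosity difference argument, and it is $o(|x|^2)$. An improvement-of-flatness iteration (Savin / Caffarelli style, or the Li–Wang approach for $a_{ij}D_{ij}$ with periodic coefficients) then yields $u-U=b\cdot x+\text{periodic}+c$. More precisely, we apply the Krylov–Safonov Harnack inequality to the first differences $u(\cdot+e_i)-u$. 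Once $u-U$ is known to grow subquadratically, these differences grow sublinearly, and they satisfy Pucci inequalities with periodic structure. A Moser–Struwe type argument then shows they are constant, so $u-U-b\cdot x$ is periodic.

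\textbf{Step 4: conclusion.} Set $v=\bar v+(u-U-b\cdot x)$. This function is periodic and
$$u=\tfrac12x^TAx+b\cdot x+v.$$
Since $\bar F(A)=0$, the final claim of Theorem B follows.

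\textbf{Main obstacle.} The hardest step is Step 3, upgrading $o(|x|^2)$ growth to "linear plus periodic". For sublinear functions satisfying only Pucci inequalities, Liouville must be applied to increments, and uniqueness of the blow-down matrix $A$ across scales is delicate. I would first try a scale iteration that shows $\|u-P_k\|_{L^\infty(B_{R^k})}\le \delta^k R^{2k}$ with geometrically converging $A_k$, using concavity to get $C^{2,\alpha}$-type approximation for $\bar F$.
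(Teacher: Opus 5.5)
\textbf{Gap in Step 3.} Step 3 is where the theorem is actually proved, and it asserts its two key points without a working mechanism.

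First, $u-U=o(|x|^2)$ is only known along the subsequence $R_i$ from Step 2. Nothing you do shows that the blow-down matrix $A$ is independent of the subsequence, and the improvement-of-flatness iteration is a plan, not an argument.

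Second, and more seriously, you claim that the first differences $g_i=(u-U)(\cdot+e_i)-(u-U)$ are constant because they are sublinear and satisfy $\mathcal{M}^-(D^2g_i)\le 0\le\mathcal{M}^+(D^2g_i)$. Nothing supports this. Krylov--Safonov gives a Liouville theorem only for solutions bounded on one side, or growing like $o(|x|^{\alpha_0})$ with $\alpha_0=\alpha_0(n,\lambda,\Lambda)$ the small Hölder exponent. Sublinear growth implies neither. The ``periodic structure'' you want for a Moser--Struwe argument is also absent. The function $g_i$ solves $a_{ij}D_{ij}g_i=0$ with $a_{ij}(x)=\int_0^1F_{ij}(tD^2u(x+e_i)+(1-t)D^2u(x))\,dt$, and these coefficients depend on $u$ and are not periodic. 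Moser--Struwe and Li--Wang work precisely because the translated difference solves the same periodic equation. (Note also that $u(\cdot+e_i)-u$ itself is not sublinear; it grows like $Ae_i\cdot x$.)

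\textbf{The missing idea: a one-sided bound.} This is exactly what your concavity computation in Step 1 is for: you derive $\mathcal{M}^+(D^2w_z)\ge 0$ and then never use it. The paper follows Caffarelli--Li, in Section 2.2 (Lemmas \ref{le3}--\ref{le7}):
\begin{enumerate}
\item $\Delta_e^2u$ is a subsolution of the linearized equation.
\item It is bounded above, by the local maximum principle together with the $W^{2,p}$ estimate.
\item Compactness under lattice translations, the strong maximum principle and the blow-down give $\sup_{\mathbb{R}^n}\Delta_e^2u=e^TAe/\|e\|^2$ for all $e\in E$. This also fixes $A$ independently of the subsequence.
\item Hence $w=u-\frac12x^TAx-b\cdot x$ satisfies $\Delta_e^2w\le 0$ on the lattice. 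With the normalization $w(e_k)=w(-e_k)$ one obtains $\sup(w-v)<\infty$ for the corrector $v$.
\item Since $w-v$ solves a linear equation with measurable coefficients and is bounded above, the Harnack inequality forces it to be constant.
\end{enumerate}

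\textbf{The case $n=2$ without concavity.} Here the route is the difference argument the paper attributes to Li--Liang. In two dimensions, strong solutions of linear equations with measurable coefficients have scale-invariant $C^{1,\alpha}$ estimates (Bers--Nirenberg). So your Step 1 bound $|u(x+e_k)-u(x)|\le C(1+|x|)$ already forces these differences to be affine, and the quadratic-plus-linear-plus-periodic structure follows. Krylov--Safonov alone does not give this.
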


\begin{rmk}
For linear elliptic equations of non-divergence form $a_{ij}(x)D_{ij}u(x)=0$ in $\mathbb{R}^n$, there has been an interesting Liouville-type theorem established by Nirenberg \cite{LN54,LN56}.  This theorem states that if the coefficients $a_{ij}$ are close to some constant and $u=O(|x|^{2-\delta})$ at infinity for some $\delta>0$, then $u$ must be a linear function.
\end{rmk}
\bigskip

In this paper, our purpose is to investigate the  Liouville-type result for quadratic polynomial growth solutions $u$ of $F(D^2u,x)=f$ with the periodicity in $x$ and the periodic right hand term $f$. Under the assumption that the oscillation of  $F(M,x)$ in $x$ is ``samll", we can generalise Theorem A and B.

We introduce the space $\mathbb{T}$ consisting of all continuously periodic functions with zero mean, defined by
$$\mathbb{T}=\left\{v\in C(\mathbb{R}^n):v(x+z)=v(x)\ \mbox{for all}\ x\in \mathbb{R}^n\ \mbox{and}\ z\in \mathbb{Z}^n, \fint_{[0,1]^n}v\,dx=0\right\}.$$

We can now formulate our main results.
\bigskip

\noindent
\textbf{1.1. Uniformly elliptic equations in the whole space.} The following theorem is the cornerstone of the existence result for quadratic  growth solutions of (\ref{eq1}).
\bigskip
\begin{thm}\label{th3}
Let $F\in C^2(\mathcal{S}^{n\times n}\times \mathbb{R}^n)$   satisfy (H1), (H2) and (H3) and $f\in C^{\alpha}(\mathbb{R}^n)$ be periodic for some $\alpha\in (0,1)$. Assume that there exists a constant $\bar{C}>0$ such that
\begin{equation}\label{eq43}
   \left(\fint_{Q_r(x_0)} |\beta(x,x_0)|^n\,dx\right)^{\frac{1}{n}}\leq \bar{C} r^\alpha
 \end{equation}
for all $r\leq 2$ and $x_0\in Q_1$.
Then for any $A\in \mathcal{S}^{n\times n}$,
\begin{equation}\label{eq2}
F(A+D^2v(x),x)-\fint_{[0,1]^n}F(A+D^2v(x),x)\,dx=f(x)- \fint_{[0,1]^n}f(x)\,dx
\end{equation}
has a unique  solution $v\in C^{2}(\mathbb{R}^n)\cap \mathbb{T}$.
\end{thm}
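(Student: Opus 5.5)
We will prove existence by the method of continuity and uniqueness by the comparison (maximum) principle on the torus. Fix $A$ and write $c(v)=\fint_{[0,1]^n}F(A+D^2v,x)\,dx$, $\bar f=\fint f$. Equation (\ref{eq2}) asks for a pair $(v,c)$ with $v\in\mathbb{T}$ such that $F(A+D^2v,x)=f(x)-\bar f+c$. This is a cell problem in the sense of homogenization. We treat $c$ as an unknown constant and then check that it coincides with $c(v)$; integrating the equation over the cell gives exactly this identity.

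\textbf{Uniqueness.} Suppose $v_1,v_2\in C^2\cap\mathbb{T}$ solve the problem with constants $c_1,c_2$. If $c_1>c_2$, then $w=v_1-v_2$ satisfies $\mathcal{M}^+(D^2w)\ge F(A+D^2v_1,x)-F(A+D^2v_2,x)=c_1-c_2>0$. But $w$ is periodic, so it attains its maximum at some point, where $D^2w\le0$ and hence $\mathcal{M}^+(D^2w)\le0$, a contradiction. By symmetry $c_1=c_2$. Then $\mathcal{M}^+(D^2w)\ge0\ge\mathcal{M}^-(D^2w)$, and the strong maximum principle applied at an interior maximum of the periodic function $w$ shows that $w$ is constant. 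Since both $v_i$ have zero mean, $w=0$.

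\textbf{Existence.} For $t\in[0,1]$ set $F_t(M,x)=tF(M,x)+(1-t)\big(F(M,x_0)\big)$ and $f_t=tf$. Each $F_t$ satisfies (H1)–(H3), and its oscillation function is $t\beta$, so (\ref{eq43}) holds uniformly in $t$. At $t=0$ the operator is constant-coefficient and the data are constant, so $v=0$ together with $c=F(A,x_0)$ solves the problem.

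\emph{Openness.} The linearized operator at a solution $v_t$ is $L h=F_{M_{ij}}(A+D^2v_t,x)D_{ij}h+\mu$, acting on pairs $(h,\mu)\in (C^{2,\alpha}\cap\mathbb{T})\times\mathbb{R}$. For $F\in C^2$ and $v_t\in C^{2,\alpha}$ its coefficients are $C^\alpha$ and periodic. By the Fredholm alternative on the torus, using the invariant measure of the adjoint, $L$ is an isomorphism onto $C^\alpha_{per}$. The implicit function theorem then gives openness.

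\emph{Closedness.} This needs a priori bounds, uniform in $t$:
\begin{itemize}
\item[(i)] $|c_t|\le C$. This follows by comparing at the maximum and minimum points of $v_t$, as in the uniqueness argument.
\item[(ii)] $\mathrm{osc}\,v_t\le C$. Apply the Krylov–Safonov Harnack inequality to $v_t-\min v_t$, which solves $\mathcal{M}^-(D^2\cdot)\le C\le \mathcal{M}^+(D^2\cdot)$, over a single period cell.
\item[(iii)] $\|v_t\|_{C^{2,\alpha}}\le C$. This is Caffarelli's interior $C^{2,\alpha}$ estimate for concave operators with small oscillation in $x$ measured in the $L^n$-average sense (\ref{eq43}), applied on $Q_2$ and using periodicity. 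The hypothesis (\ref{eq43}) with exponent $r^\alpha$ is exactly Caffarelli's condition, and the Evans–Krylov estimate for the frozen operator $F(\cdot,x_0)$ provides the base $C^{2,\bar\alpha}$ estimate; we take $\alpha$ smaller than $\bar\alpha$ if needed.
\end{itemize}
Arzelà–Ascoli then closes the set of solvable $t$. The solution at $t=1$ has $v\in C^{2,\alpha}\subset C^2$.

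We expect the main obstacle to be step (iii): verifying that Caffarelli's perturbative $C^{2,\alpha}$ theory applies uniformly along the path, in particular controlling $\beta$ for $F_t$ and keeping track of the constant $A$, which shifts the matrix argument. A further technical point is that openness requires $C^{2,\alpha}$ rather than $C^2$ regularity in order to use Schauder theory.
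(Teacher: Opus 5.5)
Your proposal is correct and follows essentially the paper's route. Uniqueness comes from the strong maximum principle on the torus, and existence from the method of continuity: closedness uses the max/min bound on the constant, the Harnack inequality and Caffarelli's interior $C^{2,\alpha}$ estimate under (\ref{eq43}), and openness uses the implicit function theorem with the periodic linear solvability of Proposition \ref{pr1}, which is equivalent to your formulation with the extra unknown constant $\mu$.

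The one real difference is your homotopy $F_t=tF+(1-t)F(\cdot,x_0)$, which gives a genuine solution $v=0$ at $t=0$ for every $A$. The paper deforms only $f$ and starts from $v_0=0$, which tacitly requires $F(A,\cdot)$ to be independent of $x$; this is hidden in its normalization $F(0,x)=0$ combined with ``without loss of generality $A=0$''.
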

\bigskip
As a matter of fact,  we can dispense with the smooth assumption on $F$ and $f$ via a smooth approximation of $F$ and $f$ and the H\"{o}lder estimate.
\bigskip
 \begin{cor}\label{th4}
 Let $F\in C(\mathcal{S}^{n\times n}\times \mathbb{R}^n)$   satisfy (H1), (H2) and (H3) and $f\in C(\mathbb{R}^n)$ be periodic. Assume that  there exists a positive  constant $\beta_0$ depending only on $n$, $\lambda$, $\Lambda$ and $p>n$ such that
 \begin{equation}\label{eq46}
   \left(\fint_{Q_r(x_0)} |\beta(x,x_0)|^n\,dx\right)^{\frac{1}{n}}\leq \beta_0
 \end{equation}
for all $r\leq 2$ and $x_0\in Q_1$. Then for any $A\in \mathcal{S}^{n\times n}$, there exists a unique $\alpha\in \mathbb{R}$ such that
\begin{equation*}
  F(A+D^2v,x)-\alpha=f(x)-\fint_{[0,1]^n}f(x)\,dx
\end{equation*}
has a unique viscosity solution  $v\in \mathbb{T}$.
 \end{cor}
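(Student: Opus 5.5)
We approximate the continuous data by smooth data, apply Theorem \ref{th3} to the approximations, and pass to the limit. Uniform estimates come from the small-oscillation hypothesis (\ref{eq46}) through the Caffarelli $W^{2,p}$/$C^{1,\alpha}$ theory for equations with small oscillation. Uniqueness comes from the comparison principle on the torus.

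\emph{Regularization.} Fix $A$. Mollify $F$ in both variables to obtain $F_\varepsilon\in C^2(\mathcal{S}^{n\times n}\times\mathbb{R}^n)$. Mollification is an average of translates, so $F_\varepsilon$ keeps (H1) with the same $\lambda,\Lambda$, keeps (H2), and keeps (H3). Likewise mollify $f$ to obtain periodic $f_\varepsilon\in C^{\alpha}$ with $f_\varepsilon\to f$ uniformly. Write $\beta_\varepsilon(x,x_0)$ for the oscillation function of $F_\varepsilon$. Since $F_\varepsilon$ is smooth in $x$, we have $\beta_\varepsilon(x,x_0)\le C_\varepsilon|x-x_0|$, so (\ref{eq43}) holds for $F_\varepsilon$ with some constant $\bar C_\varepsilon$. (If $F_\varepsilon$ is not already $C^2$ in $x$, replace this by a Hölder modulus.) This constant may blow up as $\varepsilon\to0$, which does not matter for Theorem \ref{th3}. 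The quantity that must be uniform is the smallness bound $\|\beta_\varepsilon\|_{L^n}\le C\beta_0$, and it survives mollification because $\beta_\varepsilon(x,x_0)$ is controlled by averages of $\beta(x-y,x_0-y)$.

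Theorem \ref{th3} then gives $v_\varepsilon\in C^2\cap\mathbb{T}$ solving $F_\varepsilon(A+D^2v_\varepsilon,x)-\alpha_\varepsilon=f_\varepsilon-\fint f_\varepsilon$, where $\alpha_\varepsilon=\fint F_\varepsilon(A+D^2v_\varepsilon,x)\,dx$.

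\emph{Uniform bounds.} This is the main obstacle.
\begin{itemize}
\item First, $|\alpha_\varepsilon|\le C(|A|+\|f\|_\infty+\sup_x|F(0,x)|)$. At a maximum point of the periodic function $v_\varepsilon$ we have $D^2v_\varepsilon\le0$. Combined with (H1), this gives $\alpha_\varepsilon\le F_\varepsilon(A,x_{\max})-f_\varepsilon(x_{\max})+\fint f_\varepsilon$. The symmetric argument at a minimum gives the lower bound.
\item Next, the oscillation of $v_\varepsilon$ must be bounded uniformly. Consider $w=v_\varepsilon-v_\varepsilon(x_{\min})\ge0$. It satisfies $\mathcal{M}^-(D^2w)\le C$ and $\mathcal{M}^+(D^2w)\ge -C$ with $C$ uniform. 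The Krylov–Safonov Harnack inequality on $Q_2$, together with periodicity, then yields $\sup w\le C(\inf w+C)=C$.
\item Finally, for uniform continuity we need an equicontinuity modulus. Krylov–Safonov already gives a uniform $C^{\gamma}$ bound. To also obtain $W^{2,p}$ bounds for $p>n$, and hence $C^{1,1-n/p}$ bounds, we invoke Caffarelli's perturbation theory. Concavity (H3) supplies $C^{1,1}$ estimates for the frozen operators $F_\varepsilon(\cdot,x_0)$, and $\beta_0$ is chosen small depending on $n,\lambda,\Lambda,p$. This is exactly where the smallness in (\ref{eq46}) enters, and it explains why $\beta_0$ depends on $p$.
\end{itemize}

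\emph{Limit and uniqueness.} By Arzelà–Ascoli we extract a subsequence with $v_\varepsilon\to v$ uniformly (hence $v\in\mathbb{T}$) and $\alpha_\varepsilon\to\alpha$. Since $F_\varepsilon\to F$ locally uniformly, the stability of viscosity solutions shows that $v$ is a viscosity solution with constant $\alpha$.

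For uniqueness of $\alpha$, suppose $(v_1,\alpha_1)$ and $(v_2,\alpha_2)$ are solutions with $\alpha_1<\alpha_2$. Then $v_1$ is a strict subsolution relative to $v_2$. Since the difference is periodic, $v_1-v_2$ attains a maximum. Applying the viscosity comparison there (via the Jensen–Ishii lemma, using continuity of $F$ in $x$, or via the $W^{2,p}$ strong-solution framework, where $w=v_1-v_2$ satisfies $\mathcal{M}^+(D^2w)\ge \alpha_1-\alpha_2$ up to the $x$-oscillation error) yields a contradiction. With $\alpha$ equal, $w$ satisfies $\mathcal{M}^+(D^2w)\ge0$ and $\mathcal{M}^-(D^2w)\le0$ and is periodic. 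The strong maximum principle then forces $w$ to be constant, and the zero-mean condition makes $w=0$.

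The delicate point is comparison for merely continuous $x$-dependence. We would try the $L^n$-viscosity framework of Caffarelli–Crandall–Kocan–Świech, in which $W^{2,p}$ solutions with $p>n$ are strong and the Pucci inequalities hold almost everywhere.
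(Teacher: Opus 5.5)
Your proposal is correct and follows the paper's proof essentially step for step. You mollify $F$ and $f$, apply Theorem~\ref{th3}, bound $\alpha_\varepsilon$ at the extrema of $v_\varepsilon$, control the oscillation by the Harnack inequality plus periodicity, pass to the limit via Krylov--Safonov compactness and stability, and obtain uniqueness from Caffarelli's $W^{2,p}$ regularity of arbitrary periodic viscosity solutions together with the maximum principle for strong solutions. That uniqueness step, not compactness, is where (\ref{eq46}) is actually used, so your uniform $W^{2,p}$ bounds for the approximations are unnecessary.

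There is one sign slip to fix. For $\alpha_1<\alpha_2$ the useful inequality is $\mathcal{M}^-(D^2(v_1-v_2))\le \alpha_1-\alpha_2<0$ a.e., with no $x$-oscillation error since both equations are evaluated at the same point. This gives the contradiction at a \emph{minimum} of $v_1-v_2$, not at a maximum.
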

 \bigskip
\begin{rmk}\label{rm1}
The constant $\beta_0$ coming from \cite[Theorem 7.1]{CC95} ensures that the interior   $W^{2, p}$ estimate holds for viscosity solutions of $F(D^2u,x)=f(x)$. More importantly, taking into consideration the periodicity of $\beta(x,x_0)$ and the condition (\ref{eq46}), we can establish the adimensional   $W^{2,p}$ estimate, which will be useful in the establishment of the following Liouville-type result.
 \end{rmk}

We introduce the homogenisation operator $\bar{F}$ as follows.

\begin{defn}
Let $F\in C(\mathcal{S}^{n\times n}\times \mathbb{R}^n)$   be elliptic and $f\in C(\mathbb{R}^n)$ be periodic. For any $A\in \mathcal{S}^{n\times n}$, if  there exists a unique $\alpha\in \mathbb{R}$ such that
\begin{equation*}
  F(A+D^2v,x)-\alpha=f(x)-\fint_{[0,1]^n}f(x)\,dx
\end{equation*}
has a unique viscosity solution  $v\in \mathbb{T}$, then  the homogenisation operator $\bar{F}: \mathcal{S}^{n\times n} \rightarrow \mathbb{R}$ is defined by
$$A\mapsto \bar{F}(A)=\alpha.$$
\end{defn}
\bigskip
\begin{rmk}
\emph{(i).} Here we emphasize the dependence of  $\bar F$ on $A$ and actually, it may also depends on $f$.

\emph{(ii).}  If $F$ and $f$ satisfy the hypotheses of Corollary \ref{th4}, then we can define the homogenisation operator $\bar{F}$.  In particular, if $F(M,x)=a_{ij}(x)M_{ij}$, then we obtain the explicit homogenization operator
 \begin{align*}
   \bar{F}(A):= & \fint_{Q_1}a_{ij}(x)(A_{ij}+D_{ij}v(x))\,dx \\
   =& A_{ij}\fint_{Q_1}a_{ij}(x)m(x)\,dx-\fint_{Q_1}f(x)m(x)\,dx+\fint_{Q_1}f(x)\,dx,
 \end{align*}
 where $m$ is a periodic function and the unique solution of the problem $D_{ij}(a_{ij}(x)m(x))=0$ in $\mathbb{R}^n$  with $\fint_{Q_1}m(x)\,dx=1$ (see \cite[Theorem 2]{AL89} and \cite{BLP78}).  Clearly, the positive definite  matrix $Q$ in Theorem A is actually $\fint_{Q_1}a_{ij}(x)m(x)\,dx$.

 \emph{(iii).} If the operator $F$ is the $k$-Hessian operator, then the homogenization operator $\bar{F}$ is the $k$-Hessian operator itself (see \cite[Theorem 1.14]{LL2024}).
 \end{rmk}

 \bigskip
Consequently, we will give the existence of quadratic polynomial growth solutions of (\ref{eq1}).
\bigskip
\begin{thm}\label{th1}
Let $F\in C(\mathcal{S}^{n\times n}\times \mathbb{R}^n)$   satisfy (H1), (H2) and (H3) and  $f\in  C(\mathbb{R}^n)$ be periodic. Assume that  the oscillation $\beta(x,x_0)$ satisfies the condition (\ref{eq46}).
Then for any $A\in \mathcal{S}^{n\times n}$, $b\in \mathbb{R}^n$  and $c\in \mathbb{R}$, (\ref{eq1}) has a viscosity solution $u$ satisfying $u(x)=\frac{1}{2}x^{T}Ax+b\cdot x+c+v(x)$ for some  $v\in \mathbb{T}$ if and only if $\bar{F}(A)=\fint_{[0,1]^n}f(x)\,dx$. In particular, $v$ is uniquely determined by $A$.
\end{thm}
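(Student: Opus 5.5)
The proof reduces Theorem \ref{th1} to Corollary \ref{th4} and the definition of $\bar F$. The key observation is that the affine part $b\cdot x+c$ is invisible to $D^2$. So for $u(x)=\frac12 x^TAx+b\cdot x+c+v(x)$ with $v\in\mathbb{T}$, the function $u$ is a viscosity solution of (\ref{eq1}) if and only if $v$ is a viscosity solution of
$$F(A+D^2v,x)=f(x)\quad\text{in }\mathbb{R}^n.$$
I will check this at the level of test functions. If $\varphi$ touches $v$ from above (below) at $x_0$, then $\varphi+\frac12x^TAx+b\cdot x+c$ touches $u$ from above (below) at $x_0$, and the Hessians differ exactly by $A$. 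The converse is the same argument in reverse, so the viscosity inequalities for $u$ and for $v$ coincide.

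\emph{Sufficiency.} Assume $\bar F(A)=\fint_{[0,1]^n}f\,dx$. By Corollary \ref{th4}, which applies because (\ref{eq46}) holds, there is a unique $\alpha=\bar F(A)$ and a unique $v\in\mathbb{T}$ solving $F(A+D^2v,x)-\alpha=f-\fint f$ in the viscosity sense. Substituting $\alpha=\fint f$ gives $F(A+D^2v,x)=f$. By the reduction above, $u=\frac12x^TAx+b\cdot x+c+v$ is then a viscosity solution of (\ref{eq1}).

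\emph{Necessity.} Suppose $u$ of the stated form solves (\ref{eq1}) with some $v\in\mathbb{T}$. Then $v$ solves $F(A+D^2v,x)-\alpha'=f-\fint f$ with $\alpha':=\fint f$. The definition of $\bar F$, backed by Corollary \ref{th4}, asserts that exactly one pair $(\alpha,v)$ with $v\in\mathbb{T}$ solves this problem. Hence $\alpha'=\bar F(A)$, which is the required condition. The same uniqueness shows that $v$ is determined by $A$ alone, independently of $b$ and $c$.

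The one delicate step is the uniqueness of $\alpha$, and with it of $v$, that is embedded in Corollary \ref{th4}. If it had to be re-derived, I would argue by comparison on the torus. Suppose $(\alpha_1,v_1)$ and $(\alpha_2,v_2)$ are two solutions with $\alpha_1>\alpha_2$. By (H1), $w=v_1-v_2$ satisfies $\mathcal M^+(D^2w)\ge\alpha_1-\alpha_2>0$ in the viscosity sense. Since $w$ is periodic and continuous, it attains an interior maximum, which contradicts this strict inequality. So $\alpha_1=\alpha_2$. Then $w$ satisfies $\mathcal M^+(D^2w)\ge0$ and $\mathcal M^-(D^2w)\le0$, so by the strong maximum principle $w$ is constant, and the zero-mean normalization forces $w\equiv0$. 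In the viscosity setting the subtraction of two solutions needs the standard theory for uniformly elliptic $F$ with continuous $x$-dependence, for example \cite{CC95}. Since this is assumed in Corollary \ref{th4}, I will simply invoke it.
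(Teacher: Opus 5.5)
Your proposal is correct and follows essentially the paper's route: sufficiency comes directly from Corollary \ref{th4}. Necessity comes from uniqueness of the pair $(\alpha,v)$, which the paper re-derives by comparing $v$ with the corrector from Corollary \ref{th4} via the strong maximum principle, as in your backup argument. The subtraction step you defer to \cite{CC95} is justified in the paper by the $W^{2,p}$ ($p>n$) regularity that (\ref{eq46}) provides through \cite[Theorem 7.1]{CC95}, so that the strong maximum principle for strong solutions applies despite the $x$-dependence of $F$.
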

\bigskip
The following  Liouville theorem indicates that any quadratic polynomial growth solution of (\ref{eq1})  must be a quadratic polynomial plus a periodic function.
\bigskip
 \begin{thm}\label{th2}
 Let $F\in C(\mathcal{S}^{n\times n}\times \mathbb{R}^n)$   satisfy (H1), (H2) and (H3) and  $f\in  C(\mathbb{R}^n)$ be periodic.
  Assume that $u$ is a viscosity solution of (\ref{eq1}) and satisfies
 \begin{equation}\label{eq36}
   |u(x)|\leq C(1+|x|^2),\ \ \ x\in \mathbb{R}^n
 \end{equation}
 for some constant $C>0$. Then, if  the oscillation $\beta(x,x_0)$ satisfies the condition (\ref{eq46}), there exist some $A\in \mathcal{S}^{n\times n}$ with $\bar{F}(A)=\fint_{[0,1]^n}f(x)\,dx$, $b\in \mathbb{R}^n$, $c\in \mathbb{R}$ and $v\in \mathbb{T}$ such that $$u(x)=\frac{1}{2}x^{T}Ax+b\cdot x+c+v(x).$$
 \end{thm}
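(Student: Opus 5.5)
The idea is to follow the Moser–Struwe / Li–Wang / Li–Liang scheme. First I would use the periodic structure to show that second differences of $u$ along lattice directions are bounded. Then I would show they are periodic and constant, which forces $u$ to be a quadratic polynomial plus a periodic function. Finally, Theorem \ref{th1} identifies $A$.

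\textbf{Step 1: a scaled $W^{2,p}$ bound.} For $R\ge 1$ set $u_R(y)=R^{-2}u(Ry)$. It solves $F_R(D^2u_R,y)=f(Ry)$ with $F_R(M,y)=F(M,Ry)$. The oscillation of $F_R$ at scale $r$ is that of $F$ at scale $Rr$. By periodicity of $\beta$ together with (\ref{eq46}) (applied on unit cubes and summed over $\mathbb{Z}^n$ translates), the small-oscillation hypothesis holds uniformly in $R$; this is the "adimensional" estimate of Remark \ref{rm1}. Hence Caffarelli's $W^{2,p}$ estimate \cite{CC95} applies uniformly. Combined with (\ref{eq36}) it gives $\|u_R\|_{W^{2,p}(B_1)}\le C$, and therefore $\|u\|_{W^{2,p}(B_R)}\le CR^{n/p}\cdot R^{0}$ in averaged form:
$$\Big(\fint_{B_R}|D^2u|^p\Big)^{1/p}\le C, \qquad \operatorname{osc}_{B_R}Du\le CR \quad (p>n).$$

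\textbf{Step 2: boundedness of second differences.} For $z\in\mathbb{Z}^n$ put $w_z(x)=u(x+z)+u(x-z)-2u(x)$. By (H2) and periodicity of $f$, the three functions $u(\cdot\pm z)$ and $u$ solve the same equation. Concavity (H3) gives that $w_z$ is a viscosity supersolution of $\mathcal{M}^-(D^2w)\le 0$. It is also a subsolution of $\mathcal{M}^+\ge 0$, since differences of solutions solve the Pucci inequalities by (H1). Step 1 gives $w_z=O(|z|^2)$ uniformly. Applying the Harnack inequality on growing balls to $\sup w_z - w_z$ and $w_z-\inf w_z$ then yields a Liouville statement: a bounded solution of both Pucci inequalities is constant. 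So $w_z\equiv c_z$ for each $z$.

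The main obstacle is exactly this step: showing $w_z$ is globally bounded rather than of growth $|x|$. Here I would use the Moser–Struwe induction. First, the first differences $u(x+z)-u(x)$ satisfy the Pucci inequalities and grow at most linearly. By $C^{1,\alpha}$-type estimates from the uniform $W^{2,p}$ scaling, their oscillation on $B_R$ is $\le CR$. Next, $w_z$ is a difference of two such first differences. The uniform Hölder/Harnack decay at scale $R$ gives $\operatorname{osc}_{B_{R}}w_z\le C R^{1-\alpha'}$ iteratively, and repeating the decay argument brings this to $O(1)$. I would try first to mimic \cite{LL2024} directly, since the Pucci inequalities carry no $x$-dependence.

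\textbf{Step 3: conclusion.} Since $c_z$ is a quadratic form in $z$ on $\mathbb{Z}^n$ (from $w_z\equiv c_z$ and additivity of the differences), there is $A$ with $c_z=z^TAz$. Then $u-\tfrac12x^TAx$ has periodic-affine first differences, i.e. $u-\tfrac12x^TAx-b\cdot x$ is periodic for a suitable $b$. Writing it as $c+v$ with $v\in\mathbb{T}$, Theorem \ref{th1} forces $\bar F(A)=\fint_{[0,1]^n}f$.
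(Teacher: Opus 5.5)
Your Step 2 has a genuine gap, and it is the heart of the proof.

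\textbf{Step 2 fails.} The function $w_z=u(\cdot+z)+u(\cdot-z)-2u$ is not a difference of two solutions. It is the sum of the two first differences $u(\cdot\pm z)-u$, and the Pucci class $\{\mathcal{M}^-(D^2w)\le 0\le \mathcal{M}^+(D^2w)\}$ is not closed under addition. So (H1) gives you nothing for $w_z$.

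Concavity gives exactly one inequality, and it is the opposite of the one you attribute to it. By (H2), $F(D^2u(\cdot\pm z),x)=f=F(D^2u,x)$, so (H3) yields $F(D^2u+\frac12D^2w_z,x)\ge F(D^2u,x)$. Thus $w_z$ is a \emph{subsolution}: $F_{ij}(D^2u,x)D_{ij}w_z\ge 0$, hence $\mathcal{M}^+(D^2w_z)\ge0$. This is the paper's Lemma \ref{le3}. The supersolution inequality would need convexity or linearity of $F$.

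With only one-sided information, the local maximum principle and the averaged $W^{2,p}$ bound give just $\sup_{\mathbb{R}^n}w_z<\infty$ (Lemma \ref{le4}). Step 1 controls $w_z$ only in $L^p$-average, not pointwise, so there is no lower bound. Even a bounded subsolution on $\mathbb{R}^n$, $n\ge3$, need not be constant; take $-\min\{1,|x|^{2-n}\}$ for the Laplacian.

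Your Moser--Struwe iteration breaks for the same reason.
\begin{itemize}
\item Krylov--Safonov decay for the first differences, which are in the Pucci class, only gives $|w_z(x)|\le C(1+|x|)^{1-\alpha_0}$.
\item You cannot iterate on $w_z$, since it satisfies no two-sided inequality.
\item Theorem A cannot be applied to the first differences either, because the linear equation they satisfy has $u$-dependent, non-periodic coefficients.
\end{itemize}
This is exactly why the paper remarks that the difference argument only works for linear $F$ or $n=2$.

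\textbf{What is missing} is the Caffarelli--Li mechanism the paper uses to live with one-sidedness.
\begin{itemize}
\item \emph{Identify $A$ independently of the differences, by blow-down} (Lemma \ref{le5}). One gets $u_R\to\frac12x^TAx$ in $C^1_{loc}$ with $\bar F(A)=\fint_{[0,1]^n}f$. This uses the correctors of Corollary \ref{th4} as perturbed test functions, and the ellipticity and concavity of $\bar F$ (Lemma \ref{le8}) to see that the limit is quadratic.
\item \emph{Combine the one-sided information with this limit.} This gives $\sup_{\mathbb{R}^n}\Delta_e^2u=e^TAe/\|e\|^2$ (Lemma \ref{le6}), so $w=u-\frac12x^TAx-b\cdot x$ satisfies $\Delta_e^2w\le0$ on the lattice.
\item \emph{Apply Liouville to $h=w-v$, not to a second difference.} Here $v\in\mathbb{T}$ solves $F(A+D^2v,x)=f$. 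This is the key point: $h$ is a genuine difference of two solutions of the same equation, so it solves a linear uniformly elliptic equation. For such a function the one-sided bound $\sup h<\infty$ (Lemma \ref{le7}, from $\Delta_e^2w\le0$ and the normalization $w(e_k)=w(-e_k)$) already forces $h$ to be constant via Harnack.
\end{itemize}

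\textbf{A smaller issue in Step 3.} Vanishing lattice second differences of $g=u-\frac12x^TAx$ only make $g(\cdot+e_k)-g$ periodic, not constant; compare $x_1\sin(2\pi x_2)$. You would also need to observe that $u(\cdot+e_k)-u$ minus an affine function is a periodic function in the Pucci class. It is then constant by the strong maximum principle.
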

\bigskip
\begin{rmk}
\emph{(i).} We consider a simple case that the oscillation $\beta(x,x_0)$ is continuous at $x_0$. The condition (\ref{eq46}) is superfluous in Theorem \ref{th1}, whereas the proof of Theorem \ref{th2} requires it to obtain the $W^{2,p}$ estimate for  the blow-down sequence for solutions.

\emph{(ii).}  If the oscillation of $\beta(x,x_0)$ in $Q_1$ is small enough, then it is clear that the condition (\ref{eq46}) holds.

\emph{(iii).}  If $n=2$ or $F(M,x)=a_{ij}(x)M_{ij}$, we can apply the difference argument previously used by Li and Liang \cite{LL2024} to obtain  the Liouville type result. Moreover, for linear elliptic equations, the small  oscillation condition (\ref{eq46}) can be dropped out. Actually, by the H\"{o}lder estimate, we see that there exists some constant $C>0$ such that
\begin{equation*}
  \begin{split}
 r^{\alpha}[u]_{\alpha; Q_{2r}}& \leq C (\|u\|_{L^{\infty}(Q_{4r})}+16r^2\|\tilde{f}\|_{L^{\infty}(Q_{4r})})\\
      & \leq C (1+16r^2+16r^2\|f\|_{L^{\infty}(\mathbb{R}^n)}).
  \end{split}
\end{equation*}
Therefore, for any $x\in Q_r$ and orthogonal basis $e_k$ $(k=1,\ldots,n)$, this implies that
$$|u(x+e_k)-u(x)|\leq C(1+|x|^{2-\alpha}).$$
From Theorem A, it follows that the first order difference $u(x+e_k)-u(x)$ must be a affine function, which immediately yields that $u$ is a second order polynomial plus a periodic function.
\end{rmk}

\bigskip
\noindent
\textbf{1.2. Uniformly elliptic equations on exterior domains.} Theorem \ref{th1} enables us to establish the following existence theorem for the Dirichlet problem on exterior domains with prescribed asymptotic behavior at infinity.
\bigskip
\begin{thm}\label{th10}
Let $F\in C(\mathcal{S}^{n\times n}\times \mathbb{R}^n)$   satisfy (H1), (H2) and (H3) and $f\in C^\alpha(\mathbb{R}^n)$  for some $0<\alpha<1$ be periodic. Assume that  the oscillation $\beta(x,x_0)$ satisfies the condition (\ref{eq46}), that $A\in \mathcal{S}^{n\times n}$ satisfies $\bar{F}(A)=\fint_{[0,1]^n}f(x)\,dx$ and that $\Omega\subset \mathbb{R}^n$ is a domain satisfying a uniform interior sphere condition. Then for any $b\in \mathbb{R}^n$ and $\varphi\in C(\partial \Omega)$, there exist some  $c\in \mathbb{R}$ and $v\in \mathbb{T}$ such that
there exists a  viscosity solution $u\in C(\mathbb{R}^n \backslash \overline{\Omega})$  of
\begin{equation*}
   \left\{
\begin{aligned}
   &F(D^2u,x)=f\ \ \mbox{in}\ \ \mathbb{R}^n \backslash \overline{\Omega}\\
&u=\varphi\ \  \mbox{on}\ \ \partial \Omega
\end{aligned}
\right.
 \end{equation*}
satisfying
$$\lim_{|x|\rightarrow \infty}\left(u(x)-\frac{1}{2}x^{T}Ax-b\cdot x-c-v(x)\right)=0,$$
where $v\in \mathbb{T}$ is a unique viscosity solution of  $F(A+D^2v,x)=f$ in $\mathbb{R}^n$. Furthermore, if   $\frac{\Lambda}{\lambda}< n-1$, we have the following estimate
\begin{equation}\label{eq39}
  \left|u(x)-\frac{1}{2}x^{T}Ax-b\cdot x-c-v(x)\right|\leq C|x|^{1-(n-1)\frac{\lambda}{\Lambda}}, \ \ \ x\in \mathbb{R}^n \backslash \overline{\Omega},
\end{equation}
where $C$ is a positive constant.
\end{thm}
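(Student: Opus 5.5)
We would prove Theorem \ref{th10} by Perron's method on the exterior domain, building sub- and supersolutions from the entire solution of Theorem \ref{th1} plus radial barriers adapted to the Pucci operators. Fix $A$ with $\bar F(A)=\fint_{[0,1]^n}f\,dx$ and $b$. Theorem \ref{th1} gives $w(x)=\frac12 x^TAx+b\cdot x+v(x)$, with $v\in\mathbb{T}$, solving $F(D^2w,x)=f$ in $\mathbb{R}^n$. Since $f\in C^\alpha$ and the oscillation condition (\ref{eq46}) holds, we expect $v\in C^{1,1}$ or at least $W^{2,p}_{loc}$ via the $W^{2,p}$ estimate of Remark \ref{rm1}. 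This regularity lets us treat $F(D^2w+D^2\phi,x)-F(D^2w,x)$ as bounded between $\mathcal{M}^-(D^2\phi)$ and $\mathcal{M}^+(D^2\phi)$ by (H1).

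For the barriers we take the standard Pucci radial functions. With $\theta=(n-1)\frac{\lambda}{\Lambda}-1$, which is positive when $\frac{\Lambda}{\lambda}<n-1$, the function $\psi(x)=-|x-x_0|^{-\theta}$ satisfies $\mathcal{M}^+(D^2\psi)\le 0$ away from $x_0$. The uniform interior sphere condition gives, at each boundary point $\xi\in\partial\Omega$, a ball $B_\rho(x_\xi)\subset\Omega$ tangent at $\xi$; this $\rho$ is uniform, and $\Omega$ is presumably bounded, since $\mathbb{R}^n\setminus\overline\Omega$ is an exterior domain. For a constant $c$ we then define
\begin{equation*}
\underline u_{\xi}(x)=w(x)+c+\varphi(\xi)-\epsilon-K_\epsilon\bigl(\rho^{-\theta}-|x-x_\xi|^{-\theta}\bigr).
\end{equation*}
Ellipticity (H1) makes $\underline u_\xi$ a subsolution. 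Choosing $K_\epsilon$ large gives $\underline u_\xi\le\varphi$ on $\partial\Omega$ and $\underline u_\xi(\xi)\ge \varphi(\xi)-\epsilon$. We set $\underline u=\sup_\xi \underline u_\xi$ and take $\overline u=w+c+\sup\varphi$ plus a correction as the supersolution; a maximum of subsolutions stays a viscosity subsolution. Perron's method, with comparison available because $F$ is uniformly elliptic, continuous and concave, then yields a solution $u$ with $\underline u\le u\le\overline u$ and the correct boundary values at every $\xi$. When $\frac{\Lambda}{\lambda}\ge n-1$ we replace $|x|^{-\theta}$ by barriers $\log|x|$ or $-|x|^{\gamma}$ with small $\gamma$ that decay slowly; this is enough for the limit statement but not for the rate.

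The constant $c$ is fixed by matching at infinity. Each $\underline u_\xi-w$ tends to $c+\varphi(\xi)-\epsilon-K_\epsilon\rho^{-\theta}$, so the sub- and supersolutions have different constant limits. Following the Caffarelli--Li exterior-problem scheme, we would therefore run the construction with a parameter $c$ and define $u_c$ as the Perron solution with prescribed asymptotic constant $c$. We then show that $c\mapsto\lim(u_c-w)$ is well defined and continuous, and that it covers a suitable range. This yields some $c$ with $u-w\to c$, with rate $|x|^{-\theta}=|x|^{1-(n-1)\lambda/\Lambda}$, which gives (\ref{eq39}). To get a limit at all, we compare $u-w$ against $c\pm C|x|^{-\theta}$ on $\{|x|>R\}$, where $C$ absorbs $\sup_{|x|=R}|u-w-c|$.

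We expect the main obstacle to be that $F$ depends on $x$, so linearising around $w$ does not immediately give a constant-coefficient Pucci inequality unless $D^2w$ is controlled. We would first try to show that the difference $u-w$ of two viscosity solutions satisfies $\mathcal{M}^-(D^2(u-w))\le 0\le\mathcal{M}^+(D^2(u-w))$ in the viscosity sense. This uses the $W^{2,p}$ regularity of $w$ from Remark \ref{rm1} and the Caffarelli--Crandall--Kocan--Święch theory of $L^p$-viscosity solutions. A second delicate point is showing that the asymptotic constant depends continuously on the Perron parameter.
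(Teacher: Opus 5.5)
\paragraph{Main gap: the constant at infinity.} The genuine gap is the existence of $\lim_{|x|\to\infty}(u-w)$. This is the main assertion of the theorem, and it is claimed for every value of $\frac{\Lambda}{\lambda}$.

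Your scheme for it does not work. Defining $u_c$ as ``the Perron solution with prescribed asymptotic constant $c$'' and then proving that $c\mapsto\lim(u_c-w)$ is well defined is circular. Comparing $u-w$ with $c\pm C|x|^{-\theta}$ on the unbounded set $\{|x|>R\}$ already presupposes $u-w-c\to 0$, because comparison there needs information at infinity.

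When $\frac{\Lambda}{\lambda}\ge n-1$ the situation is worse. Radial supersolutions $|x|^{-\gamma}$ of $\mathcal{M}^+$ exist only for $\gamma\le 0$, and $\log|x|$ and $-|x|^{\gamma}$ grow, so nothing controls the behaviour at infinity. For $F(M,x)=\mathrm{tr}\,M$, $f=0$, $n=2$, the constant $c$ is even determined by $\varphi$ (bounded exterior harmonic functions in the plane are fixed by their boundary values). So no scheme that prescribes $c$ can work in this regime.

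\paragraph{What the paper does instead.} No barrier at infinity is needed. The functions $w\pm\bar C$, with $\bar C=\|w-\varphi\|_{L^\infty(\partial\Omega)}$, are exact solutions. Hence the solution obtained from the Dirichlet problems on $B_r\setminus\overline\Omega$ with $u_r=w$ on $\partial B_r$ (or a Perron solution squeezed between $w\pm\bar C$) satisfies $|u-w|\le\bar C$.

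Your own observation then finishes the job, used with Harnack rather than barriers. By $W^{2,p}$ regularity, $h=u-w$ solves a linear equation $a_{ij}D_{ij}h=0$ with ellipticity constants $\lambda,\Lambda$.
\begin{itemize}
\item The maximum principle on bounded annuli makes $r\mapsto\max_{|x|=r}h$ and $r\mapsto\min_{|x|=r}h$ eventually monotone. Since they are bounded, both converge.
\item The scale-invariant Harnack inequality, applied to $\sup_{|y|\ge r}h-h\ge 0$ on $\{2r\le|x|\le 4r\}$, forces the two limits to coincide.
\end{itemize}
So $c$ is simply \emph{defined} as $\lim(u-w)$, with no matching and no continuity in $c$. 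Only after $h-c\to 0$ is known does comparison with $\pm C|x|^{1-(n-1)\lambda/\Lambda}$ on $\{R_0<|x|<R\}$, $R\to\infty$, give the rate when $\frac{\Lambda}{\lambda}<n-1$.

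\paragraph{Secondary issue: the barrier has the wrong sign.} Take $\Phi(x)=|x|^{-\theta}$ with $\theta=(n-1)\frac{\lambda}{\Lambda}-1$. The eigenvalues of $D^2\Phi$ are $\theta(\theta+1)|x|^{-\theta-2}$ (radial) and $-\theta|x|^{-\theta-2}$ (multiplicity $n-1$). Therefore $\mathcal{M}^+(D^2\Phi)=0$, while $\mathcal{M}^-(D^2\Phi)<0$ if $\lambda<\Lambda$. So $\psi=-\Phi$ has $\mathcal{M}^+(D^2\psi)>0$, not $\le 0$.

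Your $\underline u_\xi$ is $w$ plus a constant plus $K_\epsilon|x-x_\xi|^{-\theta}$. By (H1), $F(D^2\underline u_\xi,x)\le f$, so it is a supersolution, not a subsolution.
\begin{itemize}
\item A lower boundary barrier of this shape needs an exponent $\gamma\ge(n-1)\frac{\Lambda}{\lambda}-1$, or the paper's exponential barrier.
\item If you keep $w$ in the barrier you must subtract $w(\xi)$ rather than add $c$. Otherwise the requirements $\underline u_\xi(\xi)\ge\varphi(\xi)-\epsilon$ and $\underline u_\xi\le\varphi$ on $\partial\Omega$ force $0\le w(\xi)+c\le\epsilon$ for every $\xi\in\partial\Omega$.
\end{itemize}

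With the sign corrected, your framework can be rescued when $\frac{\Lambda}{\lambda}<n-1$. Assuming $0\in\Omega$ and $K$ large, $w+c-K|x|^{-\theta}$ and $w+c+K|x|^{-\theta}$ form a global subsolution/supersolution pair that pins any prescribed $c$. This does nothing for the case $\frac{\Lambda}{\lambda}\ge n-1$, however.
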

\bigskip
\begin{rmk}
\emph{(i).} The domain $\Omega$ is said to satisfy  a uniform interior sphere condition if there is a constant $\rho>0$ such that for any $x_0\in \partial \Omega$ there exists a ball $B_{\rho}(z_{x_0})\subset \Omega$ with $\overline{B_{\rho}(z_{x_0})}\cap \partial \Omega={x_0}$ for some $z_{x_0}\in \Omega$.

\emph{(ii).} Here $|x|^{1-(n-1)\frac{\lambda}{\Lambda}}$ is the fundamental solution of the Pucci's operators (see \cite{L01,ASS11} for more details).
\end{rmk}
\bigskip
As an extension of Liouville theorem (Theorem \ref{th2}), we investigate the asymptotic behavior at infinity of quadratic  growth viscosity solutions  of  (\ref{eq1}) in  exterior domains.
 \bigskip
 \begin{thm}\label{th9}
 Let $F\in C(\mathcal{S}^{n\times n}\times \mathbb{R}^n)$   satisfy (H1), (H2) and (H3) with (\ref{eq43}) and 
 and $f\in C^\alpha(\mathbb{R}^n)$  for some $0<\alpha<1$ be periodic.
 Assume that  the oscillation $\beta(x,x_0)$ satisfies the condition (\ref{eq46}), that $\beta(x,x_0)$ is H\"{o}lder continuous in $Q_1$ and that
 $u$ is a viscosity solution of
 $$F(D^2u,x)=f\ \ \ \mbox{in}\ \ \ \mathbb{R}^n \backslash \overline{B_1}$$
and satisfies
 \begin{equation*}
   |u(x)|\leq C|x|^2,\ \ \ x\in \mathbb{R}^n \backslash \overline{B_1}
 \end{equation*}
 for some constant $C>0$. Then, if   $\frac{\Lambda}{\lambda}< n-1$, there exist some $A\in \mathcal{S}^{n\times n}$ with $\bar{F}(A)=\fint_{[0,1]^n}f(x)\,dx$, $b\in \mathbb{R}^n$, $c\in \mathbb{R}$ and $v\in \mathbb{T}$ such that
 $$\left|u(x)-\frac{1}{2}x^{T}Ax-b\cdot x-c-v(x)\right| \leq C|x|^{1-(n-1)\frac{\lambda}{\Lambda}},\ \ \ x\in \mathbb{R}^n \backslash \overline{B_1}$$
for some cosnstant $C>0$, where $v\in \mathbb{T}$ is a unique viscosity solution of  $F(A+D^2v,x)=f$ in $\mathbb{R}^n$.
 \end{thm}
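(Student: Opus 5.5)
We argue by blow-down, in the spirit of the Liouville theorem (Theorem \ref{th2}) and of the known exterior-domain results for $F(D^2u)=f$.

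\textbf{Step 1: identify $A$ by blow-down.} Set $u_R(y)=R^{-2}u(Ry)$ for $|y|>1/R$. Then $|u_R|\le C|y|^2$ and
$$F\bigl(D^2u_R(y),Ry\bigr)=f(Ry).$$
The adimensional $W^{2,p}$ and $C^{1,\alpha}$ estimates of Remark \ref{rm1} hold uniformly in $R$, because $\beta$ is periodic and satisfies (\ref{eq46}). Homogenization then gives a subsequence with $u_R\to U$ locally uniformly in $\mathbb{R}^n\setminus\{0\}$. The limit satisfies $\bar F(D^2U)=\fint f$ there and $|U|\le C|y|^2$.

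Next we need $\bar F$ to be concave and uniformly elliptic with constants $\lambda,\Lambda$. Concavity is inherited from (H3) through the cell problem, using comparison for $v\in\mathbb{T}$. Since $\Lambda/\lambda<n-1$, the point singularity at the origin is removable for the limit equation: the fundamental solution $|y|^{1-(n-1)\lambda/\Lambda}$ is bounded near $0$, so a comparison argument works. Hence $U$ solves $\bar F(D^2U)=\fint f$ in all of $\mathbb{R}^n$ with quadratic growth. By the Liouville theorem for concave uniformly elliptic equations (Evans--Krylov plus scaling), $U=\frac12 y^TAy$ with $\bar F(A)=\fint f$. Uniqueness of the limit, independent of the subsequence, will come from Step 2.

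\textbf{Step 2: rate of convergence.} Let $v\in\mathbb{T}$ solve $F(A+D^2v,x)=f$; this exists by Corollary \ref{th4} and Theorem \ref{th1}. Put $w=u-\frac12x^TAx-v$. Since $F$ is concave and uniformly elliptic, $w$ satisfies
$$\mathcal{M}^-(D^2w)\le 0\le \mathcal{M}^+(D^2w)$$
in $\mathbb{R}^n\setminus\overline{B_1}$. Step 1 gives $w=o(|x|^2)$.

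The key claim is an improvement-of-flatness iteration on dyadic annuli. We show that $w$ is $o(|x|^2)$ and in fact $O(|x|)$, and then that $w-b\cdot x$ is bounded. The tool is the $C^{1,\alpha}$ and $C^{2,\alpha}$ regularity of the homogenized problem, with quantitative homogenization error $\|u_R-U\|\le CR^{-\gamma}$ on annuli. This error is where hypothesis (\ref{eq43}) and the H\"older continuity of $\beta$ enter: they give $C^{2,\alpha}$ correctors via Theorem \ref{th3}.

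A cleaner route: the difference quotients $w(x+e_k)-w(x)$ solve Pucci-type inequalities with linear-coefficient structure. Apply to them the exterior-domain asymptotics for linear non-divergence equations with periodic coefficients (an exterior analogue of Theorem A). This yields $w=b\cdot x+c+O(|x|^{1-(n-1)\lambda/\Lambda})$.

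\textbf{Step 3: the final decay.} Once $w-b\cdot x$ is bounded, set $h=w-b\cdot x$. Then $h$ is bounded and lies between Pucci sub- and supersolutions outside $B_1$. Let $c=\lim h$; the limit exists by a Harnack/oscillation decay argument on annuli. Compare $h-c$ with
$$\pm C|x|^{1-(n-1)\lambda/\Lambda},$$
which is a Pucci supersolution/subsolution exactly because $\Lambda/\lambda<n-1$. Boundedness on $\partial B_1$ together with the maximum principle on $B_\rho\setminus B_1$, letting $\rho\to\infty$, gives the stated estimate.

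\textbf{Main obstacle.} The hard part is Step 2: passing from $w=o(|x|^2)$ to $w=b\cdot x+O(1)$. Periodic oscillation prevents a direct Liouville argument, and we must quantify the homogenization error. I would first try the difference-quotient reduction combined with the Hölder regularity of $\beta$. If that fails, I would run a Campanato-type iteration at scales $2^k$ using the $C^{2,\alpha}$ corrector estimates of Theorem \ref{th3}.
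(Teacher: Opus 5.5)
The gap is Step 2, which carries the entire content of the theorem; you leave it as two tentative routes, and neither closes.

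The difference-quotient route fails for nonlinear $F$. The function $u(\cdot+e_k)-u$ solves a linear equation whose coefficients are built from $D^2u(x)$ and $D^2u(x+e_k)$, so they are \emph{not} periodic. Theorem A therefore gives nothing here, and the exterior version you invoke is not available anyway. This is why the paper only mentions a difference argument for linear $F$ or $n=2$.

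The quantitative-homogenization route presupposes an algebraic rate $\|u_R-U\|\le CR^{-\gamma}$ on annuli. That rate is neither proved in your proposal nor supplied by (\ref{eq43}), Theorem \ref{th3}, or anything else in the paper.

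Without Step 2 you do not even have uniqueness of the blow-down matrix $A$, which you explicitly defer to Step 2. Consequently your assertion $w=o(|x|^2)$ holds only along a subsequence, not as stated.

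The missing idea is to reduce to the whole-space Liouville theorem (Theorem \ref{th2}) rather than redo the Liouville analysis in the exterior domain. The paper's construction runs as follows.
\begin{itemize}
\item Extend $u|_{\mathbb{R}^n\setminus B_2}$ to $\bar u\in C^2(\mathbb{R}^n)$, so that $F(D^2\bar u,x)=f+g$ with $g$ supported in $B_2$.
\item Solve $F(D^2u_r,x)=f$ in $B_r$ with $u_r=u$ on $\partial B_r$.
\item Compare $\bar u-u_r$ with $E(x)=\bar C|x|^{\frac12(1-\frac{\lambda}{\Lambda}(n-1))}$. Because $\Lambda/\lambda<n-1$, $E$ decays at infinity, blows up at $0$, and satisfies $\mathcal{M}^+(D^2E)\le-\|g\|_{L^\infty}$ on $B_2\setminus\{0\}$ for large $\bar C$.
\item This gives $|\bar u-u_r|\le E$ away from the origin, uniformly in $r$; the singularity of $E$ absorbs whatever $\bar u-u_r$ does near $0$.
\item ABP in $B_2$ then gives local bounds, and a subsequence converges to an \emph{entire} solution $\tilde w$ of $F(D^2\tilde w,x)=f$ with $|\tilde w-u|\le E$ outside $B_2$.
\item In particular $\tilde w$ has quadratic growth, so Theorem \ref{th2} gives $\tilde w=\frac12x^TAx+b\cdot x+c+v$.
\item Your Step 3, applied to the bounded difference $u-\tilde w$ (which satisfies Pucci inequalities by $W^{2,p}$ regularity), yields the stated rate.
\end{itemize}

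With this reduction your Step 1 becomes unnecessary. Note also that in Step 1 you have the sign backwards. Under $\Lambda/\lambda<n-1$ the exponent $1-(n-1)\lambda/\Lambda$ is negative, so the fundamental solution blows up at $0$. That blow-up, not boundedness, is what makes bounded isolated singularities removable and what makes the barrier $E$ work.
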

\bigskip
\begin{rmk}
For fully nonlinear uniformly elliptic equations $F(D^2u)=f$ in exterior domains, there have been substantial asymptotic results.
When the right hand $f$ is a constant, Li, Li and Yuan \cite{LLY20} obtained the asymptotic behavior at infinity under the smooth assumption on $F$ for the high dimensional case, whereas the two dimensional case was recently solved by Li and Liu \cite{LL24}. Subsequently, Lian and Zhang \cite{LZP} obtained an asymptotic  result  without the smooth assumption on $F$. When the right hand $f$ is a periodic function, Li and Liang \cite{LL2024} established an asymptotic result.
\end{rmk}
\bigskip

This paper is organized as follows. In Section 2, we use the method of continuity to prove Theorem \ref{th3} and  obtain Corollary \ref{th4} by a smooth approximation. Furthermore, with the help of the uniform ellipticity  for  the homogenisation operator, we  achieve the proof  of Theorem \ref{th1}. By considering the blow-down sequence for solutions, we determine the second order term, thereby  reducing the proof of Theorem \ref{th2} to analogous analysis to the Monge-Amp\`{e}re equation case \cite{CL04}. In Section 3, we use Theorems \ref{th1} and \ref{th2} to prove
Theorems \ref{th10}  and \ref{th9}.
\bigskip

We close this section by introducing some notations.

$\bullet$ For $r>0$, $Q_r=[-\frac{r}{2},\frac{r}{2}]^n$.

$\bullet$ For $x\in \mathbb{R}^n$ and $r>0$, $B_r(x)=\{y\in \mathbb{R}^n: |x-y|<r\}$ and $B_r=B_r(0)$.

$\bullet$ For $M\in \mathcal{S}^{n\times n}$ and $r>0$, $B^r(x)=\{N\in \mathbb{R}^n: |M-N|<r\}$ and $B^r=B^r(0)$.

$\bullet$ $e_1=(1,0,\ldots, 0),\ldots, e_n=(0,0,\ldots, 1)$.

$\bullet$   Set $$E=\{k_1 e_1+\cdots +k_n e_n: k_1,\ldots, k_n\ \ \mbox{are integers}\},$$
and the second order difference quotients
$$\Delta_{h}^2u(x)=\frac{u(x+h)+u(x-h)-2u(x)}{\|h\|^2}\ \ \mbox{for}\ \ h\in \mathbb{R}^n.$$

$\bullet$  The  H\"{o}lder seminorm
$$[u]_{\alpha; \Omega}=\sup_{x,y \in \Omega}\frac{|u(x)-u(y)|}{|x-y|^\alpha}.$$
\bigskip
\section{Uniformly elliptic equations in the whole space}
\noindent

Throughout this section, we set $X=C^{2,\alpha}(\mathbb{R}^n) \cap \mathbb{T}$ and $Y=C^{ \alpha}(\mathbb{R}^n) \cap \mathbb{T}$ for some $0<\alpha<1$.
In addition, we  assume $F(0,x)=0$ for all $x\in \mathbb{R}^n$.

\subsection{Existence}
\noindent

Firstly, we will prove Theorem \ref{th3} by using the method of continuity. Secondly, we will complete the proof of Corollary \ref{th4} by a smooth approximation of $F$ and $f$ and the H\"{o}lder estimate.

We need the following:
\begin{pro}\label{pr1}
Let $a_{ij}$ be periodic and of class $C^{ \alpha} (0<\alpha<1)$ satisfying
\begin{equation}\label{eq23}
  \bar{\lambda} |\xi|^2\leq a_{ij}(x) \xi_i \xi_j\leq \bar{\Lambda} |\xi|^2
\end{equation}
for any $x, \xi \in \mathbb{R}^n$, where $0<\bar{\lambda}\leq \bar{\Lambda} < \infty$.  Then for any periodic function $f\in C^{\alpha}(\mathbb{R}^n)$, the problem
$$a_{ij}D_{ij}v-\fint_{Q_1}a_{ij}D_{ij}v\,dx=f-\fint_{Q_1}f\,dx$$
has a unique solution $v\in C^{2,\alpha}(\mathbb{R}^n)$ being periodic.
\end{pro}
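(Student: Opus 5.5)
The plan is to reduce Proposition \ref{pr1} to a solvability statement for the periodic operator $L v:=a_{ij}D_{ij}v$ on the torus. The key tool is the invariant measure $m$ of the adjoint, namely the periodic solution of $D_{ij}(a_{ij}m)=0$ with $\fint_{Q_1}m\,dx=1$ and $m>0$ (cf. \cite{AL89,BLP78}). We will show that for every periodic $g\in C^\alpha$ with $\fint_{Q_1} g\, m\,dx=0$ there is a periodic solution $w\in C^{2,\alpha}$ of $Lw=g$, unique up to additive constants. Granting this, set $c:=\fint_{Q_1}f m\,dx-\fint_{Q_1}f\,dx$ and $g:=f-\fint_{Q_1}f\,dx+c$. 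Then $\fint g m=0$, so we can solve $Lw=g$ and normalize $v=w-\fint_{Q_1}w$. Since $\fint_{Q_1} Lv\,dx=\fint g\,dx=c$, we get $Lv-\fint_{Q_1} Lv\,dx=f-\fint_{Q_1} f\,dx$, which is the desired equation.

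For uniqueness, suppose $v_1,v_2$ are two periodic $C^{2,\alpha}$ solutions. Their difference $w$ satisfies $Lw=\text{const}=:k$. Integrating against $m$ forces $k=0$, since $\fint (Lw)m=0$ by the adjoint equation (periodicity kills boundary terms). So $Lw=0$ on the torus, and the strong maximum principle at a maximum point of the periodic function $w$ shows $w$ is constant. Thus $v$ is unique up to constants, and unique once we impose the zero-mean normalization of $\mathbb{T}$; I would read the statement with that normalization, as in $X$.

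For existence of $w$, I would use Fredholm theory for $L-\varepsilon$ with $\varepsilon>0$. By the maximum principle and the Schauder theory on the torus (compact manifold, $C^\alpha$ coefficients), the operator $L-\varepsilon:C^{2,\alpha}_{per}\to C^\alpha_{per}$ is invertible. Next we write $L=(L-\varepsilon)+\varepsilon$, and $\varepsilon (L-\varepsilon)^{-1}$ is compact on $C^{\alpha}_{per}$, so $L$ is Fredholm of index $0$. Its kernel is the constants, by the maximum principle argument above. Its range therefore has codimension one and is contained in $\{g:\fint gm=0\}$, which is also of codimension one, so the two coincide.

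The main obstacle is the construction and regularity of $m$. With only $C^\alpha$ coefficients, $m$ is merely a distributional or $L^p$ solution. I would obtain $m$ abstractly instead: since the range of $L$ is closed of codimension one, there is a nonzero functional annihilating it. Pairing with $\varepsilon(L-\varepsilon)^{-1}$ and using the positivity of the resolvent (maximum principle) shows this functional can be represented by a nonnegative measure, normalizable to mass one. This avoids any need for regularity of $m$, which enters only through $\fint (Lw) m=0$. Uniqueness of the normalized $m$ follows from the codimension-one range. The Schauder estimate itself is standard, via local estimates on $Q_2$ combined with periodicity, and the maximum principle bound $\|w\|_\infty\le \varepsilon^{-1}\|g\|_\infty$.
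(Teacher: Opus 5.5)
Your argument is correct apart from a sign slip, but it takes a different route from the paper.

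\textbf{The paper's route.} The paper only cites Li--Liang for a method-of-continuity proof. The natural version, parallel to Step 1 of the proof of Theorem \ref{th3}, works directly with the normalized operator $v\mapsto Lv-\fint_{Q_1}Lv\,dx$ from $C^{2,\alpha}\cap\mathbb{T}$ to $C^{\alpha}\cap\mathbb{T}$. The a priori bound comes from evaluating the equation at the extremal points of $v$, which gives $|\fint_{Q_1}Lv\,dx|\le\|f\|_{L^\infty}$; then Harnack together with the zero mean gives the $L^\infty$ bound, and Schauder finishes. The constant $\fint_{Q_1}Lv\,dx$ comes out automatically, and the scheme transfers verbatim to the nonlinear Theorem \ref{th3}.

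\textbf{Your route.} You do Fredholm theory for $L$ itself: kernel $=$ constants, and index zero via compactness of $\varepsilon(L-\varepsilon)^{-1}$. This gives a structural description of the cokernel and, through $m$, the explicit constant $\fint_{Q_1}f\,dx-\fint_{Q_1}fm\,dx$ from the paper's Remark on $\bar F$. Note, however, that inverting $L-\varepsilon$ on the torus still needs a continuity (or equivalent) argument, so you do not escape that machinery.

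\textbf{The slip.} For $\fint_{Q_1}gm\,dx=0$ with $g=f-\fint_{Q_1}f\,dx+c$ you need $c=\fint_{Q_1}f\,dx-\fint_{Q_1}fm\,dx$, the negative of what you wrote. This is harmless, since your final verification holds for any $c$ making $Lw=g$ solvable.

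\textbf{The invariant measure is unnecessary.} The step you call the main obstacle can be skipped. $\mathrm{Range}(L)$ is closed of codimension one, and $1\notin\mathrm{Range}(L)$, because at a maximum point of a periodic $w$ one has $Lw\le 0$. Hence every periodic $C^\alpha$ function equals $Lw+c$ for some periodic $w$ and constant $c$, which is exactly the existence statement. The same extremal-point argument gives $k=0$ in your uniqueness step, so $m$ can be dropped entirely.

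If you do keep $m$, your sketch is missing two things:
\begin{itemize}
\item the estimate $\|\varepsilon(\varepsilon-L)^{-1}g\|_{C^{\alpha}}\le C\|g\|_{L^\infty}$, needed to extend the annihilating functional to continuous functions;
\item an argument giving it a sign, e.g.\ invariance of $|\mu|$ combined with the one-dimensionality of the annihilator.
\end{itemize}
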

\begin{proof}
Using  the method of continuity, we can complete the proof. A detailed proof of this theorem was provided by Li and Liang \cite{LL2024}.
\end{proof}

\bigskip
\begin{proof}[Proof of Theorem \ref{th3}]
\emph{Uniqueness.} We let $v, w\in C^{2}(\mathbb{R}^n)\cap \mathbb{T}$ be  solutions of (\ref{eq2}). It follows that
either
$$F(A+D^2v(x),x)\leq F(A+D^2w(x),x)$$
or
$$F(A+D^2w(x),x)\leq F(A+D^2v(x),x).$$
Clearly, by the strong maximum principle and $v, w\in \mathbb{T}$, we obtain  $v=w$.

\emph{Existence.}
The proof mainly relies  on the method of continuity. Without loss of generality we  assume $A=0$, $\fint_{Q_1}f(x)\,dx=0$ and $f\in C^{\alpha}(\mathbb{R}^n)$ for some small enough $\alpha>0$.

Now we  consider the  map $ \mathcal{F} :   X \times [0,1]\longrightarrow Y$ defined by
 $$(v,t)\longmapsto \mathcal{F}(v,t)= F(D^2v,x)-\fint_{Q_1}F(D^2v,x)\,dx-tf,$$
and the set
$$\mathcal{T}:=\{t\in [0,1]: \mathcal{F}(v_t,t)=0 \ \ \mbox{for some}\ \ v_t\in X\}.$$
It is obvious that $0\in \mathcal{T}$ since $v_0=0$  is a unique solution. So to establish the existence of solutions $v\in X$  of (\ref{eq2}), i.e., $1\in \mathcal{T}$, it suffices to show that $\mathcal{T}$ is both open and closed in $[0,1]$.

\emph{Step 1}: \textbf{$\mathcal{T}$ is closed.} Let $v_t\in X$ denote a solution of $ \mathcal{F}(v_t,t)=0$.
 From the Evans-Krylov theory (see \cite[Theorem 8.1]{CC95}), we have
$$ \|v_t\|_{C^{2,\alpha} (Q_1)} \leq C \left(\|v_t\|_{L^{\infty}(Q_2)}+\|f\|_{C^{\alpha}(Q_2)}+\left|\fint_{Q_1}F(D^2v_t,x)\,dx\right|\right),$$
where  $C>0$  depends only on $n$, $\lambda$, $\Lambda$, $\alpha$ and $\bar{C}$.
By the aid of the uniform ellipticity condition for $F$ and the interpolation inequality,
we arrive at the estimate
\begin{equation*}
  \|v_t\|_{C^{2,\alpha} (Q_1)}\leq C (\|v_t\|_{L^{\infty}(Q_1)}+\|f\|_{C^{\alpha}(Q_1)}).
\end{equation*}

Next we need to show that
there exists a constant $C>0$ depending only on $\lambda$, $\Lambda$ and  $n$ such that
\begin{equation}\label{eq4}
  \|v_t\|_{L^{\infty}(Q_1)}\leq C \|f\|_{L^{\infty}(Q_1)}.
\end{equation}
To see this, since $v_t$ has a local minimum at $z\in Q_1$ and a local maximum  at $y\in Q_1$, by the uniform ellipticity for $F$, we obtain
$$    -tf(z)\leq \fint_{Q_1}F(D^2v_t,x)\,dx \leq  -tf(y),$$
which implies that
$$\left|\fint_{Q_1}F(D^2v_t,x)\,dx\right|\leq \|tf\|_{L^{\infty}(Q_1)}\leq\|f\|_{L^{\infty}(Q_1)}.$$
Combining this with the  Harnack inequality, we have
$$\|v_t-\min_{Q_1}v_t\|_{L^{\infty}(Q_1)}\leq C\|f\|_{L^{\infty}(Q_1)},$$
where $C>0$ depends only on $\lambda$, $\Lambda$ and  $n$. Consequently, we establish the estimate (\ref{eq4}), since $v_t\in X$.
Finally,  we obtain a priori estimates
\begin{equation*}
  \|v_t\|_{C^{2,\alpha} (Q_1)}\leq C \|f\|_{C^{\alpha}(Q_1)}
\end{equation*}
for some constant $C>0$ depending only on $n$, $\lambda$, $\Lambda$, $\alpha$ and $\bar{C}$, from which the closeness of $\mathcal{T}$ follows clearly.

 \emph{Step 2}: \textbf{$\mathcal{T}$ is open.} Openness follows from the implicit function theorem in Banach spaces. It is easy to check that $\mathcal{F}$ is of class $C^1$ and the Frech\`{e}t differential of $\mathcal{F}$ at $(v,t)$ with respect to $v$ 
 is given by
 $$h\mapsto D_v\mathcal{F}(v,t)[h]=F_{ij}(D^2v,x)D_{ij}h-\fint_{Q_1}F_{ij}(D^2v,x)D_{ij}h\,dx.$$
From Proposition \ref{pr1}, we see easily that $D_v\mathcal{F}(v,t)$ is a linear isomorphism between $X$ and $Y$.
 \end{proof}
\bigskip
By the aid of  Theorem \ref{th3}, we prove Corollary \ref{th4}.
\bigskip
\begin{proof}[Proof of Corollary \ref{th4}]
Without loss of generality we assume $A=0$ and $\fint_{[0,1]^n}f(x)\,dx=0$. We consider two nonnegative functions $\varphi\in C^{\infty}(\mathcal{S}^{n\times n})$ and $\eta\in C^{\infty}( \mathbb{R}^n)$ such that
$$\mbox{supp}\ \varphi \subseteq \overline{B^1},\ \ \ \mbox{supp}\ \eta \subseteq  \overline{B_1},\ \ \ \int_{\mathcal{S}^{n\times n}} \varphi\,dx=1\ \ \ \mbox{and}\ \ \ \int_{ \mathbb{R}^n} \eta \,dx=1,$$
where $B_1$ is the unit ball in $\mathbb{R}^n$ and $B^1$ is the unit ball in $\mathcal{S}^{n\times n}$.
For any $\epsilon >0$, set

$$\varphi_\epsilon(M):=\frac{1}{\epsilon^{n^2}}\varphi\left(\frac{M}{\epsilon}\right),\ \ \ M\in \mathcal{S}^{n\times n} \ \ \ \mbox{and}\ \ \ \eta_\epsilon(x):= \frac{1}{\epsilon^{n}}\eta\left(\frac{x}{\epsilon}\right),\ \ \ x\in \mathbb{R}^n.$$
Let $F_\epsilon$ be the mollification of $F$ in $\mathcal{S}^{n\times n}\times \mathbb{R}^n$ and $f_\epsilon$ be the mollification of $f$ in $\mathbb{R}^n$.  Then we define the oscillation of $F_\epsilon$ in the variable $x$, that is,
$$\widetilde{\beta_\epsilon}(x,x_0)=\sup_{M\in \mathcal{S}^{n\times n}}\frac{|F_\epsilon(M,x)-F_\epsilon(M,x_0)|}{1+\|M\|}.$$
Recalling that
\begin{equation*}
  \begin{split}
      F_\epsilon(M,x)-F_\epsilon(M,x_0)& =\int_{\mathcal{S}^{n\times n}\times \mathbb{R}^n} F(z,N)\varphi_\epsilon(M-N)\widetilde{\eta_\epsilon}(z)\,dzdN\\
      &=\int_{\{B_\epsilon(x)\cup B_\epsilon(x_0) \}\times B^\epsilon(M)} F(z,N)\varphi_\epsilon(M-N)\widetilde{\eta_\epsilon}(z)\,dzdN,
          \end{split}
\end{equation*}
where $\widetilde{\eta_\epsilon}(z)=\eta_\epsilon(x-z)-\eta_\epsilon(x_0-z)$,
we can show that
\begin{align*}
    \frac{\widetilde{\beta_\epsilon}(x,x_0)}{|x-x_0|}&=\sup_{M\in \mathcal{S}^{n\times n}}\frac{|F_\epsilon(M,x)-F_\epsilon(M,x_0)|}{(1+\|M\|)|x-x_0|} \\
  &\leq \sup_{M\in \mathcal{S}^{n\times n}} \int_{\mathcal{S}^{n\times n}\times \mathbb{R}^n} \frac{|F(z,N)|}{1+\|M\|}\varphi_\epsilon(M-N)\frac{\widetilde{\eta_\epsilon}(z)}{|x-x_0|}\,dzdN\\
  & \leq \sup_{M\in \mathcal{S}^{n\times n}} \int_{\mathcal{S}^{n\times n}\times \mathbb{R}^n} \beta(z,0)\frac{\|N\|}{1+\|M\|}\varphi_\epsilon(M-N)\sup_{\mathbb{R}^n}|D\eta_\epsilon|\,dzdN\\
  & \leq \sup_{M\in \mathcal{S}^{n\times n}} \int_{\{B_\epsilon(x)\cup B_\epsilon(x_0) \}} \beta(z,0) |D\eta_\epsilon|\,dz \int_{B^\epsilon(M) }\frac{\|N\|}{1+\|M\|}\varphi_\epsilon(M-N)\,dN\\
  & < +\infty,
\end{align*}
which implies that $\widetilde{\beta_\epsilon}(x,x_0)$ is Lipschitz continuous.

From  Theorem \ref{th3},
it follows that there is a unique solution $v_\epsilon\in Y$ such that
\begin{equation}\label{eq6}
  F_\epsilon(D^2v_\epsilon,x)-\fint_{Q_1}F_\epsilon(D^2v_\varepsilon,x)\,dx=f_\epsilon.
\end{equation}
Clearly, by $F(0,x)=0$, we note  that  $|F_\epsilon(0,x)|\leq 1$ for small enough $\epsilon>0$. Furthermore, since $v_\epsilon(y)=\max_{Q_1}v_\epsilon$ is a subsolution and $v_\epsilon(z)=\min_{Q_1}v_\epsilon$ is a supersolution,
 we  know that
$$-1-\sup_{Q_1}f_\epsilon \leq   F_\epsilon(0,z)-f_\epsilon(z)\leq \fint_{Q_1}F_\epsilon(D^2v_\epsilon,x)\,dx \leq  F_\epsilon(0,y)-f_\epsilon(y)\leq 1
-\inf_{Q_1}f_\epsilon,$$
that is,
$$\left|\alpha_\epsilon:=\fint_{Q_1}F_\epsilon(D^2v_\epsilon,x)\,dx\right|\leq 1+\|f_\epsilon\|_{L^{\infty}(Q_1)}\leq 1+\|f\|_{L^{\infty}(Q_1)}.$$
Combining this  with  the Harnack inequality and the interior H\"{o}lder estimate, we have
$$\|\bar{v}_\epsilon:=v_\epsilon-\min_{Q_1}v_\epsilon\|_{C^\alpha(Q_1)}\leq C(1+\|f\|_{L^{\infty}(Q_1)}),$$
where $0<\alpha <1$ and $C>0$ depends only on $\lambda$, $\Lambda$ and  $n$. Then we conclude that there exists a periodic function $\bar{v}$ and a real number $\alpha$ such that, up to a subsequence,
$$\bar{v}_\epsilon \rightarrow \bar{v}\ \ \ \mbox{in}\ \ \ C(Q_1)\ \ \ \mbox{and}\ \ \ \ \alpha_\epsilon \rightarrow \alpha \ \ \mbox{as}\ \ \epsilon\rightarrow 0.$$
Hence, letting $\epsilon\rightarrow 0$ in (\ref{eq6}), we obtain
\begin{equation}\label{eq7}
  F(D^2v,x)-\alpha=f \ \ \ \mbox{in}\ \ \ \mathbb{R}^n
\end{equation}
in the viscosity sense, where $v=\bar{v}-\fint_{Q_1}\bar{v}\,dx.$

It remains to show the uniqueness of $\alpha$. Indeed, suppose that $w\in \mathbb{T}$ and $\tilde{\alpha}$ satisfy (\ref{eq7}).
In  view of \cite[Theorem 7.1]{CC95},  we have $v, w\in W^{2,p}$ for $p>n$. Therefore, from the strong maximum principle for strong solutions and $v, w\in \mathbb{T}$, it follows that $v=w$. Consequently, $\alpha=\tilde{\alpha}$.
\end{proof}
\bigskip
 \begin{proof}[Proof of Theorem \ref{th1}]
\emph{ Necessity.} Conversely, suppose that $\bar{F}(A)\neq\fint_{Q_1}f(x)\,dx$. Using Corollary \ref{th4}, we let $w\in \mathbb{T}$ be a viscosity solution
of $ F(A+D^2w,x)=f(x)-\fint_{Q_1}f(x)\,dx+\bar{F}(A)$.  Clearly, $v, w\in W^{2,p}$ for $p>n$. From the strong maximum principle  for strong solutions and $v, w\in \mathbb{T}$, we obtain  $v=w$, which yields  $\bar{F}(A)=\fint_{Q_1}f(x)\,dx$, a contradiction.

\emph{ Sufficiency.} It is clear from Corollary 1.2.
 \end{proof}
\bigskip
At the end of this section, we   collect some properties of the homogenisation operator $\bar{F}$, which come from \cite{Ev92}.
 \bigskip
 \begin{lem}\label{le8}
 (i) If the  operator $F$ is uniformly elliptic, so is the homogenisation operator $\bar{F}$ with the same ellipticity constants as the operator $F$.

 (ii) If the operator $F$ is concave in $\mathcal{S}^{n\times n}$, so is the homogenisation operator $\bar{F}$ in $\mathcal{S}^{n\times n}$.
 \end{lem}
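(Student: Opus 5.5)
The plan is to compare correctors for different matrices $A$ through the comparison principle for periodic functions. This rests on one elementary fact from the uniqueness parts of Corollary \ref{th4} and Theorem \ref{th1}. If $w\in\mathbb{T}$ (or any periodic continuous function) satisfies $F(A+D^2w,x)\le f-\fint_{Q_1}f+\alpha$ in the viscosity sense, and $v$ is the corrector with $F(A+D^2v,x)=f-\fint_{Q_1}f+\bar F(A)$, then $\bar F(A)\ge\alpha$; the reverse inequality holds for supersolutions. To see this, suppose $\bar F(A)<\alpha$. Then $w-v$ is periodic and attains its minimum. By the $W^{2,p}$ regularity from \cite[Theorem 7.1]{CC95} (available under (\ref{eq46})), $v$ is a strong solution and can be used as a test function a.e. At the minimum point the difference $D^2(w-v)\ge 0$ in the appropriate sense, and (H1) gives $F(A+D^2w,x)\ge F(A+D^2v,x)$. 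This contradicts $\alpha>\bar F(A)$. To make the argument fully rigorous, I would rather use the strong maximum principle for strong solutions as in the proof of Theorem \ref{th1}: $w-v$ is a supersolution of a Pucci inequality $\mathcal{M}^-(D^2(w-v))\le \bar F(A)-\alpha<0$, and it is periodic with an interior minimum, which is impossible.

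For (i), take $A$ and $N\ge0$, and let $v_A$ be the corrector for $A$. By (H1), $F(A+N+D^2v_A,x)$ lies between $F(A+D^2v_A,x)+\lambda\|N\|$ and $F(A+D^2v_A,x)+\Lambda\|N\|$. So $v_A$ is simultaneously a subsolution for $A+N$ with constant $\bar F(A)+\Lambda\|N\|$ and a supersolution with constant $\bar F(A)+\lambda\|N\|$. The comparison fact above then yields
\[
\lambda\|N\|\le \bar F(A+N)-\bar F(A)\le \Lambda\|N\|.
\]
Here the comparison is carried out in the viscosity sense. I would use that the difference of a viscosity sub- and supersolution of operators satisfying (H1) is a viscosity solution of the corresponding Pucci inequalities (\cite[Ch.~5]{CC95}), so that no $W^{2,p}$ regularity is really needed.

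For (ii), take $A,B$ with correctors $v_A,v_B$, and set $w=\frac12(v_A+v_B)\in\mathbb{T}$. If the correctors are smooth, (H3) gives
\[
F\Bigl(\tfrac{A+B}{2}+D^2w,x\Bigr)\ge \tfrac12F(A+D^2v_A,x)+\tfrac12F(B+D^2v_B,x)=f-\fint_{Q_1}f+\tfrac12\bigl(\bar F(A)+\bar F(B)\bigr).
\]
So $w$ is a supersolution with that constant, and comparison gives $\bar F(\frac{A+B}{2})\ge\frac12(\bar F(A)+\bar F(B))$. The main obstacle is justifying that the average of two viscosity solutions is a viscosity supersolution of the concave equation. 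I would handle this in one of two ways. The first is the regularization of Corollary \ref{th4}: for smooth $F_\epsilon$, $f_\epsilon$, Theorem \ref{th3} gives $C^2$ correctors, where the computation above is classical. Then I would pass to the limit using $\bar F_\epsilon\to\bar F$, which follows from the convergence $\alpha_\epsilon\to\alpha$ and the uniqueness of $\alpha$ shown in that proof. The alternative is to use that the correctors are $W^{2,p}$ strong solutions, for which the pointwise a.e. inequality suffices together with the strong-solution comparison.
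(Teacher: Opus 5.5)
Your strategy is the paper's: the corrector of one matrix serves as a barrier in the cell problem of another, and the maximum principle on the torus gives the contradiction. The paper proves (i) by contradiction, showing that $v^{M+N}$ is a strict supersolution of the $M$-cell problem relative to $v^{M}$. Your use of $v_A$ as a barrier in the $(A+N)$-problem is the mirror image, and (ii) is the same averaging of correctors.

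\textbf{The comparison fact is stated backwards.} If a periodic $w$ satisfies $F(A+D^2w,x)\le f-\fint_{Q_1}f+\alpha$, then
$\mathcal{M}^-(D^2(w-v))\le F(A+D^2w,x)-F(A+D^2v,x)\le \alpha-\bar F(A)$.
So a contradiction at a minimum of $w-v$ arises only when $\bar F(A)>\alpha$. The correct conclusion is $\bar F(A)\le\alpha$, and $\bar F(A)\ge\alpha$ when $F(A+D^2w,x)\ge f-\fint_{Q_1}f+\alpha$. The example $F(M,x)=\mathrm{tr}\,M$, $f=0$, $w=0$, $\alpha=\mathrm{tr}\,A+1$ refutes your version.

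Your sketch carries matching sign slips. At a minimum of $w-v$ you obtain $\bar F(A)\le\alpha$, which does not contradict $\bar F(A)<\alpha$. The Pucci bound should be $\alpha-\bar F(A)$, not $\bar F(A)-\alpha$.

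The inequalities you finally display in (i) and (ii) are the right ones, but only the corrected fact yields them. Your stated version, applied to the sub/supersolution properties you list, would give $\Lambda\|N\|\le\bar F(A+N)-\bar F(A)\le\lambda\|N\|$ in (i) and the reverse of concavity in (ii). Your naming is also swapped relative to the convention of \cite{CC95} used in the paper, where a subsolution satisfies $F(D^2u,x)\ge f$.

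\textbf{The ``no $W^{2,p}$ needed'' remark is not justified.} You cite \cite[Ch.~5]{CC95} for the fact that the difference of a viscosity sub- and supersolution lies in a Pucci class. That result is for operators $F(D^2u)$ without $x$-dependence. Its sup/inf-convolution proof does not extend to $F(D^2u,x)$ whose $x$-dependence is controlled only by (\ref{eq46}).

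What works is the route you also mention. By \cite[Theorem 7.1]{CC95} the correctors lie in $W^{2,p}$ with $p>n$. Hence the inequalities for $v_A$ in (i) and for $\frac12(v_A+v_B)$ in (ii) hold pointwise a.e., directly from (H1) and (H3). The relevant difference is then a periodic strong solution of a strict Pucci inequality ($\mathcal{M}^-\le-\delta$ or $\mathcal{M}^+\ge\delta$), which the maximum principle for strong solutions excludes.

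This is also what the paper tacitly uses. Its step of mollifying $v^M,v^N$ and letting $\varepsilon\to0$ is meaningful only in this a.e.\ sense, so your second option for (ii) is the clean form of the paper's step.

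Your first option is a genuinely different justification and is fine: regularize $F,f$ as in the proof of Corollary \ref{th4}, use the $C^2$ correctors from Theorem \ref{th3}, and pass to the limit. You need two additions:
\begin{itemize}
\item mollification preserves (H1)--(H3);
\item the whole family $\bar F_\epsilon(A)$ converges to $\bar F(A)$, by a subsequence argument plus the uniqueness of $\alpha$ in Corollary \ref{th4}.
\end{itemize}
That uniqueness itself rests on the $W^{2,p}$ comparison, so regularity is not avoided this way either.
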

 \begin{proof}
 (i) To obtain a contradiction, suppose that there exist some $M, N\in \mathcal{S}^{n\times n}$ and $N\geq 0$ such that
 \begin{equation}\label{eq9}
\bar{F}(M+N)-\bar{F}(M) <\lambda \|N\|.
\end{equation}
Let $v^{M}$, $v^{M+N}\in \mathbb{T}$ be  viscosity solutions of
 \begin{equation}\label{eq11}
   \left\{
\begin{aligned}
   &F(D^2v^{M}(x)+M,x)=f(x)-\fint_{Q_1}f(x)\,dx+\bar{F}(M),\\
&F(D^2v^{M+N}(x)+M+N,x)=f(x)-\fint_{Q_1}f(x)\,dx+\bar{F}(M+N).
\end{aligned}
\right.
 \end{equation}
We now claim that
\begin{equation}\label{eq10}
  F(D^2v^{M+N}(x)+M,x) <F(D^2v^{M}(x)+M,x)
\end{equation}
in the viscosity sense.
To see this, let $\phi\in C^2(\mathbb{R}^n)$ and $v^{M+N}-\phi$ has a local minimum at a point $x_0\in \mathbb{R}^n$. In view of (\ref{eq9}), (\ref{eq11}) and the uniform ellipticity of $F$, we have
\begin{equation*}
  \begin{split}
    F(D^2\phi(x_0)+M,x_0) & \leq F(D^2\phi(x_0)+M+N,x_0)-\lambda \|N\| \\
      & \leq f(x_0) -\fint_{Q_1}f(x)\,dx+\bar{F}(M+N)-\lambda \|N\|\\
      &<f(x_0)-\fint_{Q_1}f(x)\,dx+\bar{F}(M)\\
      &=F(D^2v^{M}(x_0)+M,x_0),
  \end{split}
\end{equation*}
which establishes (\ref{eq10}).
Owing to (\ref{eq10}) and the  strong maximum principle for viscosity solutions, we discover
$$v^{M}-v^{M+N}=c$$
for some constant $c$, a contradiction to (\ref{eq10}). It follows that
$$\lambda \|N\| \leq \bar{F}(M+N)-\bar{F}(M)$$
for any $M, N\in \mathcal{S}^{n\times n}$ and $N\geq 0$.
The same argument works for
$$ \bar{F}(M+N)-\bar{F}(M)\leq \Lambda \|N\|.$$

(ii) For later contradiction, let us suppose that there exist some $M, N\in \mathcal{S}^{n\times n}$ such that
$$\bar{F}\left(\frac{M+N}{2}\right)<\frac{1}{2}\bar{F}(M)+\frac{1}{2}\bar{F}(N).$$
 Let $v^{M}$, $v^{N}$, $v^{\frac{M+N}{2}}\in \mathbb{T}$ be  viscosity solutions of
 \begin{equation*}
   \left\{
\begin{aligned}
   &F(D^2v^{M}(x)+M,x)=f(x)-\fint_{Q_1}f(x)\,dx+\bar{F}(M),\\
&F(D^2v^{N}(x)+N,x)=f(x)-\fint_{Q_1}f(x)\,dx+\bar{F}(N),\\
&F\left(D^2v^{\frac{M+N}{2}}(x)+\frac{M+N}{2},x\right)=f(x)-\fint_{Q_1}f(x)\,dx+\bar{F}\left(\frac{M+N}{2}\right).
\end{aligned}
\right.
 \end{equation*}
 For $\varepsilon>0$, let $v_{\varepsilon}^{M}$ be the mollification of $v^{M}$ in $\mathbb{R}^n$ and $v_{\varepsilon}^{N}$ be the mollification of $v^{N}$ in $\mathbb{R}^n$. Then we have,  in view of concavity of $F$,

 $$F\left(\frac{D^2v_{\varepsilon}^{M}(x)+D^2v_{\varepsilon}^{N}(x)}{2}+\frac{M+N}{2},x\right) \geq \frac{1}{2}F\left(D^2v_{\varepsilon}^{M}(x)+M,x\right)+\frac{1}{2}F\left(D^2v_{\varepsilon}^{N}(x)+N,x\right).$$
Sending  $\varepsilon$ to zero, we obtain

\begin{equation*}
  \begin{split}
    F\left(\frac{D^2v^{M}(x)+D^2v^{N}(x)}{2}+\frac{M+N}{2},x\right) & \geq \frac{1}{2}F\left(D^2v^{M}(x)+M,x\right)+\frac{1}{2}F\left(D^2v^{N}(x)+N,x\right)\\
      & =\frac{1}{2}\bar{F}(M)+\frac{1}{2}\bar{F}(N)+f(x)-\fint_{Q_1}f(x)\,dx\\
      &>\bar{F}\left(\frac{M+N}{2}\right)+f(x)-\fint_{Q_1}f(x)\,dx\\
      &=F\left(D^2v^{\frac{M+N}{2}}(x)+\frac{M+N}{2},x\right)
  \end{split}
\end{equation*}
in   the viscosity sense, which together with the strong maximum principle easily yields
$$v^{\frac{M+N}{2}}-\frac{v^{M}+v^{N}}{2}=c$$
for some constant $c$. But we obtain a contradiction.
 \end{proof}
 \bigskip
\begin{rmk}
\emph{(i).}  From (i) in Lemma \ref{le8}, it follows that there exists some $t\in \mathbb{R}$ such that $\bar{F}(tI)=\fint_{Q_1}f(x)\,dx$. Hence we  can always find some $A\in \mathcal{S}^{n\times n}$ satisfying  $\bar{F}(A)=\fint_{Q_1}f(x)\,dx$.

\emph{(ii).}
In the property (ii), if $F$ is concave in some open convex  set $\Gamma \subset \mathcal{S}^{n\times n}$, so is $\bar{F}$ in $\Gamma$.
\end{rmk}
\subsection{ Liouville type result}
\noindent

This subsection will be devoted to the proof of Theorem \ref{th2}.  Following the strategy  implemented  by Caffarelli and Li \cite{CL04}, we will divide the proof into a sequence of lemmas.

Throughout  this subsection,  $u$ is a viscosity solution of
$$F(D^2u,x)=f(x)\ \ \ \mbox{in} \ \ \ \mathbb{R}^n$$
with quadratic growth (\ref{eq36}), where $F$  satisfy (H1), (H2) and (H3) and $f\in C(\mathbb{R}^n)$ is periodic. The oscillation $\beta(x,x_0)$ satisfies the condition (\ref{eq46}).

 For $R\geq 1$, let
 $$u_R(x)=\frac{u(R x)}{R^2},\ \ \ x\in \mathbb{R}^n.$$
  \bigskip
 \begin{lem}\label{le5}
There exists some  $A\in \mathcal{S}^{n\times n}$ such that a subsequence
$\{u_{R_i}\}_{i=1}^\infty$ converges uniformly in the $C^1$ norm  to $Q(x):=\frac{1}{2}x^TAx$ in any compact set of $\mathbb{R}^n$
with $$\bar{F}(A)=\fint_{Q_1}f(x)\,dx.$$
\end{lem}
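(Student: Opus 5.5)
The plan is to pass to the limit in the blow-down sequence: take a subsequential limit $u_\infty$ of the $u_R$, show that it solves a homogenized equation with a constant-coefficient concave operator, and then use the Liouville theorem for such operators. The rescaled function $u_R$ solves
\[
F\bigl(D^2u_R(x),Rx\bigr)=f(Rx)\quad\text{in }\mathbb{R}^n ,
\]
and by (\ref{eq36}) it satisfies $|u_R(x)|\le C(1+|x|^2)$ uniformly for $R\ge1$.

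First I obtain compactness. The oscillation of the rescaled operator $F(\cdot,Rx)$ is $\beta(Rx,Rx_0)$. Because $\beta$ is periodic and the small-oscillation condition (\ref{eq46}) is scale-invariant in the averaged $L^n$ sense, the remark after Corollary \ref{th4} gives a $W^{2,p}$ estimate with $p>n$ that is uniform in $R$. This is the adimensional $W^{2,p}$ estimate of \cite[Theorem 7.1]{CC95} applied on cubes $Q_r(x_0)$ after rescaling. It yields
\[
\|u_R\|_{W^{2,p}(B_\rho)}\le C(\rho)
\]
independently of $R$. By Morrey's embedding, $\{u_R\}$ is then bounded in $C^{1,1-n/p}_{loc}$. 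Arzel\`a--Ascoli gives a subsequence $u_{R_i}\to u_\infty$ in $C^1_{loc}$, and the limit satisfies $|u_\infty(x)|\le C|x|^2$ once we let $R\to\infty$.

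Next I identify the equation for $u_\infty$ using Evans' perturbed test function method \cite{Ev92}. Take a quadratic $P(x)=\frac12 x^TMx+p\cdot x+c$ touching $u_\infty$ strictly from above at $x_0$, and suppose $\bar F(M)>\fint_{Q_1} f$. Let $w\in\mathbb{T}$ be the corrector from Corollary \ref{th4}, so that $F(M+D^2w,y)=f(y)-\fint f+\bar F(M)$. The perturbed test function $P_R(x)=P(x)+R^{-2}w(Rx)$ then satisfies
\[
F\bigl(D^2P_R,Rx\bigr)=f(Rx)+\bigl(\bar F(M)-\textstyle\fint f\bigr)>f(Rx).
\]
Since $R^{-2}w(R\cdot)\to0$ uniformly, $u_{R_i}-P_{R_i}$ has a local max near $x_0$. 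The comparison principle for $W^{2,p}$ strong solutions (both functions are $W^{2,p}$) gives a contradiction there. The case of touching from below is symmetric. Hence $\bar F(D^2u_\infty)=\fint_{Q_1} f$ in the viscosity sense.

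Finally I use the structure of $\bar F$. By Lemma \ref{le8}, $\bar F$ is uniformly elliptic with constants $\lambda,\Lambda$ and concave. By Evans--Krylov, $u_\infty$ is then $C^{2,\bar\alpha}$. Rescaling the interior estimate on $B_\rho$ against the growth $|u_\infty|\le C\rho^2$ gives
\[
[D^2u_\infty]_{C^{\bar\alpha}(B_{\rho})}\le C\rho^{-\bar\alpha},
\]
so letting $\rho\to\infty$ shows $u_\infty$ is a quadratic polynomial. Since $u_R(0)=u(0)/R^2\to0$, and similarly $Du_R(0)=Du(0)/R\to0$ using the $C^1$ control of $u$ near $0$, the constant and linear parts vanish. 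Therefore $u_\infty=\frac12x^TAx$ with $\bar F(A)=\fint_{Q_1} f$.

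I expect the main obstacle to be the uniformity in $R$ of the $W^{2,p}$ estimate. The small constant $\beta_0$ of \cite{CC95} has to be checked for the operators $F(\cdot,Rx)$ on unit balls, and this amounts to controlling averages of $\beta$ over cubes of size up to $R$. My approach is to cover such a cube by unit periodic cells and use periodicity, together with the fact that (\ref{eq46}) holds for $r\le2$, so that large-cube averages are bounded by averages over $Q_1$-type cells. A secondary delicate point is the comparison step in the perturbed test function argument, since $w$ is only $W^{2,p}$. I would handle it with the strong-solution maximum principle (ABP) rather than the classical viscosity argument.
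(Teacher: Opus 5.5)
Your overall route is the paper's: uniform $W^{2,p}$ bounds for $u_R$ via the periodicity of $\beta$, $C^1_{loc}$ compactness, Evans' perturbed test function built from the corrector of Corollary~\ref{th4} plus the comparison principle for strong solutions, and Evans--Krylov with Lemma~\ref{le8} to force a quadratic limit. However, the contradiction step in the perturbed test function argument has the wrong sign and, as written, gives no contradiction. Suppose $P$ touches $u_\infty$ from above at $x_0$ and $\bar F(M)>\fint_{Q_1}f$. Then $F(D^2P_R,Rx)>f(Rx)=F(D^2u_R,Rx)$, so by uniform ellipticity $\mathcal{M}^{+}(D^2(P_R-u_R))>0$ a.e. The ABP maximum principle then gives $u_R-P_R\ge\min_{\partial B_r(x_0)}(u_R-P_R)$ in $B_r(x_0)$. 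That rules out an interior \emph{minimum} of $u_R-P_R$, which is fully compatible with the interior maximum you obtain. In fact no argument can work from that hypothesis. Take $F(M,x)=\mathrm{tr}\,M$ and $f\equiv0$: the solution $u\equiv0$ gives $u_\infty\equiv0$, which is touched strictly from above at every $x_0$ by $P(x)=|x-x_0|^2$ with $\bar F(D^2P)=2n>0=\fint_{Q_1}f$. Your ``symmetric'' case (touching from below with $\bar F(M)<\fint_{Q_1}f$) fails the same way.

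The fix is to swap the pairing, as the paper does with its $\delta<0$. When $P$ touches from above, assume $\bar F(M)<\fint_{Q_1}f$. Then $F(D^2u_R,Rx)-F(D^2P_R,Rx)=\fint_{Q_1}f-\bar F(M)>0$ a.e., so $\mathcal{M}^{+}(D^2(u_R-P_R))>0$ a.e.\ in $B_r(x_0)$. ABP for $W^{2,p}$ functions ($p>n$) then gives $(u_R-P_R)(x_0)\le\max_{\partial B_r(x_0)}(u_R-P_R)$, and letting $R=R_i\to\infty$ contradicts the strict maximum of $u_\infty-P$ at $x_0$. Touching from below is paired with $\bar F(M)>\fint_{Q_1}f$. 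With this correction your proof coincides with the paper's.

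Your other deviations are fine. Quadratic test functions make $D^2P_R=M+D^2w(R\,\cdot)$ exact, which removes the paper's small-ball $\delta/2$ adjustment. Covering large cubes by integer translates of a unit cell is the correct justification of the uniform oscillation bound. For non-integer side lengths it gives a fixed multiple of $\beta_0$ (at most $2\beta_0$) rather than the $\beta_0$ the paper states; this is absorbed by shrinking $\beta_0$.
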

\begin{proof}
\emph{Step 1: }
We will show that $$u_{R_i} \rightarrow Q\  \ \mbox{in}\  \ C^{1}_{loc}(\mathbb{R}^n)\ \ \ \mbox{as} \ \ \ R_i \rightarrow \infty.$$

 We set $\tilde{f}(x)=f(Rx)$ and $\tilde{F}(M,x)=F(M,Rx)$ for all $x\in \mathbb{R}^n$ and $M\in \mathcal{S}^{n\times n}$. Then we consider the function
 \begin{equation*}
  \begin{split}
   \tilde{\beta}_R(x,x_0)=& \sup_{M\in \mathcal{S}^{n\times n}\setminus \{0\}}\frac{|\tilde{F}(M,x)-\tilde{F}(M,x_0)|}{\|M\|}\\
      =& \sup_{M\in \mathcal{S}^{n\times n}\setminus \{0\}}\frac{|F(M,Rx)-F(M,Rx_0)|}{\|M\|}.
  \end{split}
\end{equation*}
Now we check that
 $$
 \left(\fint_{Q_r(x_0)} |\tilde{\beta}_R(x,x_0)|^n\,dx\right)^{\frac{1}{n}}=
   \left(\fint_{Q_{Rr}(Rx_0)} |\beta(x,Rx_0))|^n\,dx\right)^{\frac{1}{n}}\leq \beta_0, $$
   which follows from the periodicity of $\beta$ and (\ref{eq46}). Therefore, since $u_R$ is a viscosity solution of
   $$\tilde{F}(D^2u_R(x),x)=\tilde{f}(x),\ \ \ x\in Q_{2r} \ (r>1),$$
we have the  $W^{2,p}$ estimate $(p>n)$ (see \cite[Theorem 7.1]{CC95}). Furthermore, by the aid of the Sobolev imbedding theorem, we obtain the $C^{1,\alpha}$ estimate $(0<\alpha<1-\frac{n}{p})$, that is,
\begin{equation*}
  \begin{split}
   \|u_R\|_{L^{\infty}(Q_r)} +r\|Du_R\|_{L^{\infty}(Q_r)}+r^{1+\alpha}[Du_R]_{\alpha; Q_r}& \leq C (\|u_R\|_{L^{\infty}(Q_{2r})}+4r^2\|\tilde{f}\|_{L^{\infty}(Q_{2r})})\\
      & \leq C (1+4r^2+4r^2\|f\|_{L^{\infty}(\mathbb{R}^n)})
  \end{split}
\end{equation*}
for some constant $C>0$, 
 which implies that we  extract a subsequence $\{u_{R_i}\}_{i=1}^{\infty}$  such that
$$u_{R_i} \rightarrow Q\ \ \ \mbox{in}\ \ \ C^{1}_{loc}(\mathbb{R}^n)$$
with $$|Q(x)|\leq C|x|^2,\ \ \ x\in \mathbb{R}^n$$
for some constant $C>0$.

\emph{Step 2: $Q(x)=\frac{1}{2}x^{T}Ax.$} We claim that $Q$ is a viscosity
solution of
\begin{equation}\label{eq42}
 \bar{F}(D^2Q)=\fint_{Q_1}f(x)\,dx
\end{equation}
in $\mathbb{R}^n$.
We first prove that $Q$ is a  viscosity subsolution of (\ref{eq42}).
 Fix $\phi\in C^2(\mathbb{R}^n)$ and suppose $Q-\phi$ has a strict local maximum at $x_0$ with $Q(x_0)=\phi(x_0)$. We intend to prove
 $$\bar{F}(D^2\phi(x_0))-\fint_{Q_1}f(x)\,dx\geq 0.$$
 Suppose, to the contrary, that
 $$\delta:=\bar{F}(D^2\phi(x_0))-\fint_{Q_1}f(x)\,dx<0.$$
 Applying Theorem \ref{th4} to $A=D^2\phi(x_0)$, we let $v\in \mathbb{T}$ be a viscosity solution of
 \begin{equation}\label{eq37}
   F(D^2\phi(x_0)+D^2v,x)=f(x)+\bar{F}(D^2\phi(x_0))-\fint_{Q_1}f(x)\,dx.
 \end{equation}
 Introduce the perturbed test function
 $$\phi^{R_i}(x)=\phi(x)+\frac{1}{R_i^2}v\left(R_i x\right),\ \ \ x\in \mathbb{R}^n.$$

 We claim that
 $$F\left(D^2\phi^{R_i}(x),R_ix\right)-f\left(R_i x\right)\leq \frac{\delta}{2},\ \ \ x\in B_r(x_0)$$
 in the viscosity sense for some sufficiently small $r>0$. To see this, we fix $\psi\in C^{\infty}(\mathbb{R}^n)$ such that $\phi^{R_i}-\psi$ has a minimum at a point $x_1\in B_r(x_0)$ with
 $$\phi^{R_i}(x_1)=\psi(x_1).$$
 Then it is obvious that
$$\eta(y):=v(y)-R_i^2\left(\psi\left(\frac{1}{R_i} y\right)-\phi\left(\frac{1}{R_i} y\right)\right)$$
has a minimum at $y_1=R_ix_1$. Furthermore, observing that $v$ is a viscosity solution of (\ref{eq37}), we have
$$F(D^2\phi(x_0)+D^2\psi(x_1)-D^2\phi(x_1),R_ix_1)-f\left(R_ix_1\right)\leq \delta,$$
which implies that
$$F(D^2\psi(x_1),R_ix_1)-f\left(R_ix_1\right)\leq \frac{\delta}{2}$$
for small enough $r>0$. The claim is proved.

In view of $\phi^{R_i}, u_{R_i}\in W^{2,p}$ $(p>n)$ and
\begin{equation*}
   \left\{
\begin{aligned}
   &F(D^2\phi^{R_i}(x),R_ix)-f\left(R_ix\right)\leq \frac{\delta}{2},\ \ \ x\in B_r(x_0),\\
&F(D^2u_{R_i}(x),R_ix)-f\left(R_ix\right)=0, \ \ \ x\in B_r(x_0),
\end{aligned}
\right.
 \end{equation*}
 the comparison principle for strong solutions leads to
 $$(u_{R_i}-\phi^{R_i})(x_0)\leq \max_{\partial B_r(x_0)}(u_{R_i}-\phi^{R_i}).$$
 In addition, letting $R_i \rightarrow \infty$, we obtain
 $$(Q-\phi)(x_0)\leq \max_{\partial B_r(x_0)}(Q-\phi).$$
But, since $Q-\phi$ has a strict local maximum at $x_0$,  we obtain a contradiction.
In the same manner, we can show that $Q$ is a  viscosity supersolution of (\ref{eq42}).

From the Evans-Krylov theorem and the properties in Lemma \ref{le8}, it follows that there exists some $A\in \mathcal{S}^{n\times n}$ such that
$$Q(x)=\frac{1}{2}x^{T}Ax.$$
\end{proof}
\bigskip
We recall  that
$$E=\{k_1 e_1+\cdots +k_n e_n: k_1,\ldots, k_n\ \ \mbox{are integers}\},$$
and the second order difference quotients
$$\Delta_{e}^2u(x)=\frac{u(x+e)+u(x-e)-2u(x)}{\|e\|^2}\ \ \mbox{for}\ \ e\in E.$$
\bigskip
\begin{lem}\label{le3}
For all $e\in E$, we have
$$F_{ij}(D^2u(x),x)D_{ij}(u(x+e)+u(x-e)-2u(x))\geq 0, \ \ \ \mbox{a.e.}\ \ \  x\in \mathbb{R}^n,$$
where $F_{ij}(D^2u(x),x)$  is the subdifferential of $F$ at $D^2u(x)$.
\end{lem}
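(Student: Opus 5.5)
The plan is to use the concavity (H3) together with the periodicity (H2) of $F$ and of $f$, working pointwise almost everywhere with the $W^{2,p}$ regularity of $u$. By the small oscillation condition (\ref{eq46}) and \cite[Theorem 7.1]{CC95}, exactly as in Step 1 of the proof of Lemma \ref{le5}, we have $u\in W^{2,p}_{loc}(\mathbb{R}^n)$ for some $p>n$. Hence $u$ is twice differentiable almost everywhere and satisfies $F(D^2u(x),x)=f(x)$ pointwise a.e. as a strong solution. The same holds for the translates $u(\cdot\pm e)$.

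Next we use periodicity. Fix $e\in E$. Since $e\in\mathbb{Z}^n$, (H2) and the periodicity of $f$ give
$$F(D^2u(x\pm e),x)=F(D^2u(x\pm e),x\pm e)=f(x\pm e)=f(x)\quad\mbox{a.e.}$$
Hence, at almost every $x$, the three matrices $M_0=D^2u(x)$, $M_\pm=D^2u(x\pm e)$ all satisfy $F(M,x)=f(x)$.

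The key step is the supergradient inequality coming from (H3). Since $F(\cdot,x)$ is concave and uniformly elliptic (hence Lipschitz), for a.e. $x$ it has a nonempty superdifferential at $M_0$; we choose an element $(F_{ij}(M_0,x))$ of it, which satisfies $\lambda I\le (F_{ij})\le\Lambda I$ by (H1). This is the object the statement calls the subdifferential; one could also select it measurably in $x$, for instance as the gradient of a mollified $F$ in the limit, though this is not needed pointwise. Concavity gives, for $N=M_\pm$,
$$F(N,x)\le F(M_0,x)+F_{ij}(M_0,x)(N-M_0)_{ij}.$$
Because $F(M_\pm,x)=F(M_0,x)=f(x)$, each inequality yields $F_{ij}(M_0,x)(M_\pm-M_0)_{ij}\ge 0$. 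Adding the two,
$$F_{ij}(D^2u(x),x)\,D_{ij}\big(u(x+e)+u(x-e)-2u(x)\big)\ge 0\quad\mbox{a.e.},$$
which is the claim.

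The main obstacle is the regularity step: we need the equation to hold pointwise a.e. with $D^2u$ defined. This is exactly where condition (\ref{eq46}) and the $W^{2,p}$ theory for viscosity solutions enter. One must also check that a $W^{2,p}$ viscosity solution is a strong solution, which holds by \cite{CC95} for $p>n$. The measurability of the chosen superdifferential is only a minor technical point.
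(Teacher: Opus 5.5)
Your proposal is correct and follows essentially the same route as the paper: $W^{2,p}$ regularity from \cite[Theorem 7.1]{CC95} under (\ref{eq46}), then the concavity (supergradient) inequality at $D^2u(x)$ with $N=D^2u(x\pm e)$, and finally periodicity of $F$ and $f$ to get $F(D^2u(x\pm e),x)=f(x)=F(D^2u(x),x)$ before adding the two inequalities. You are in fact slightly more careful than the paper: you make explicit that the equation must hold pointwise a.e.\ at $x$ and $x\pm e$ (strong solution), and that $F_{ij}$ is an element of the superdifferential.
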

\begin{proof}
Due to \cite[Theorem 7.1]{CC95},  $u\in W^{2,p}$ for $p>n$.
By the concavity of $F$ in $M$ and the periodicity of $F$ in $x$, we have
\begin{equation*}
  \begin{split}
    F(D^2u(x+e),x) & \leq F(D^2u(x),x)+F_{ij}(D^2u(x),x)D_{ij}(u(x+e)-u(x)), \\
      F(D^2u(x-e),x) & \leq F(D^2u(x),x)+F_{ij}(D^2u(x),x)D_{ij}(u(x-e)-u(x)).
  \end{split}
\end{equation*}
The result follows immediately from the periodicity of $f$.
\end{proof}
\bigskip
\begin{lem}\label{le4}
For all $e\in E$, we have
$$\sup_{x\in \mathbb{R}^n}\Delta_{e}^2u(x)<\infty.$$
\end{lem}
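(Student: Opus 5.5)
Following Caffarelli--Li \cite{CL04}, the plan is to combine the one-sided equation of Lemma \ref{le3} with the quadratic growth (\ref{eq36}) through a maximum-principle argument on large cubes. Fix $e\in E$ and set $w(x)=u(x+e)+u(x-e)-2u(x)$. Lemma \ref{le3} gives
$$L w:=a_{ij}(x)D_{ij}w\geq 0 \quad\text{a.e. in }\mathbb{R}^n,\qquad a_{ij}(x)=F_{ij}(D^2u(x),x),$$
and by (H1) the measurable coefficients satisfy $\lambda|\xi|^2\le a_{ij}\xi_i\xi_j\le \Lambda|\xi|^2$. Since $u\in W^{2,p}_{loc}$ with $p>n$, the Aleksandrov--Bakelman--Pucci estimate for strong subsolutions applies to $w$. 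Thus $w$ is a strong subsolution of a uniformly elliptic equation, and the first goal is a local estimate $\sup_{Q_R}w\le C$ with $C$ independent of $R$.

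\textbf{Step 1 (averaged bound).} Write $w=\int_{-1}^{1}(1-|t|)\,D_{ee}u(x+te)\,dt$, interpreted in the distributional sense. Averaging over a cube $Q_R(x_0)$ and using the $W^{2,p}$ (hence $L^1$) control of $D^2u$ on scale $R$ coming from the rescaled estimate in Lemma \ref{le5}, Step 1, gives
$$\fint_{Q_R(x_0)}|w|\,dx\le C\fint_{Q_{2R}(x_0)}|D^2u|\,dx\le C\,\frac{\|u\|_{L^\infty(Q_{4R}(x_0))}}{R^2}+C\le C\Bigl(1+\frac{|x_0|^2}{R^2}\Bigr).$$

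\textbf{Step 2 (local maximum principle).} The local maximum principle for strong subsolutions (Krylov--Safonov, \cite[Theorem 4.8]{CC95}) gives $\sup_{Q_{R}(x_0)}w^+\le C\bigl(\fint_{Q_{2R}(x_0)}(w^+)^{\epsilon}\bigr)^{1/\epsilon}$ for some small $\epsilon>0$, and hence, by H\"older, control by the $L^1$ average. Taking $R=|x_0|+1$ turns the right-hand side of Step 1 into a constant, so $\sup_{Q_{R}(x_0)}w\le C$ with $C$ independent of $x_0$. Dividing by $\|e\|^2$ then yields $\sup_{\mathbb{R}^n}\Delta_e^2u<\infty$.

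\textbf{Main obstacle.} The delicate point is Step 1. The $W^{2,p}$ estimate on large cubes must be scale-invariant, i.e.\ the constant in (\ref{eq46}) must survive the rescaling, which is exactly what the periodicity computation in Lemma \ref{le5} provides. If the averaged $D^2u$ bound turns out to be awkward, the fallback is to control $\fint|w|$ directly from $|u|\le C(1+|x|^2)$, using that $w$ is a second difference, so $\|w\|_{L^\infty(Q_R)}\le CR^2$ is trivial. One can then try to improve this by a Caffarelli--Li style iteration comparing $u_R$ with $Q$ from Lemma \ref{le5}. Because only an upper bound on $w$ is needed, the one-sided inequality of Lemma \ref{le3} should suffice, and no information on the sign of $w$ from below is required.
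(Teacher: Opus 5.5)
Your proposal is correct and follows essentially the same route as the paper. Both arguments combine the integral representation of the second difference through $D^2u$, the scale-invariant $W^{2,p}$ estimate (made uniform on large scales by periodicity and (\ref{eq46}), as in Lemma \ref{le5}) together with quadratic growth to bound averages of $\Delta_e^2u$ uniformly, and the local maximum principle for the subsolution given by Lemma \ref{le3}. The only differences are cosmetic: the paper uses $L^p$ averages on balls $B_r$ centred at the origin and lets $r\to\infty$, while you use $L^1$ averages on cubes $Q_R(x_0)$ with $R=|x_0|+1$, so the fallback in your last paragraph is unnecessary.
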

\begin{proof}
By the $W^{2,p}$ estimate $(p>n)$ (see \cite[Theorem 7.1]{CC95}), we obtain
\begin{equation}\label{eq25}
  r^{2-\frac{n}{p}}\|D^2u\|_{L^p(B_r)}\leq C (\|u\|_{L^{\infty}(B_{2r})}+r^{2-\frac{n}{p}}\|f\|_{L^p(B_{2r})})
\end{equation}
 for some constant $C>0$ depending only on $n$, $p$, $\lambda$ and $\Lambda$.

 Since we can write
 $\Delta_{e}^2u(x)=\int_{-1}^1\frac{e^{T}}{\|e\|} D^2u(x+te)\frac{e}{\|e\|}(1-|t|)\,dt$, then the quadratic growth (\ref{eq36}) and (\ref{eq25}) give
 \begin{equation*}
   \begin{split}
     \int_{B_r}|\Delta_{e}^2u(x)|^p\,dx & = \int_{B_r}\left|\int_{-1}^1\frac{e^{T}}{\|e\|} D^2u(x+te)\frac{e}{\|e\|}(1-|t|)\,dt\right|^p\,dx\\
       & \leq 2^{p-1}\int_{B_r} \int_{-1}^1\left|\frac{e^{T}}{\|e\|} D^2u(x+te)\frac{e}{\|e\|}(1-|t|)\right|^p\,dt\,dx\\
       &=2^{p-1}\int_{-1}^1  \int_{B_r} \left|\frac{e^{T}}{\|e\|} D^2u(x+te)\frac{e}{\|e\|}(1-|t|)\right|^p\,dx\,dt\\
       &\leq 2^{p-1} \int_{-1}^1  \int_{B_{r+\|e\|}} \left\| D^2u(x)\right\|^p\,dx\,dt\\
       &\leq C(r+\|e\|)^n(1+\|f\|_{L^{\infty}(\mathbb{R}^n)})^p.
   \end{split}
 \end{equation*}
 By Lemma \ref{le3} and the local maximum principle, this gives rise to a pointwise estimate
 \begin{equation*}
\begin{split}
 \sup_{x\in B_r} \Delta_{e}^2u(x)&\leq  C\left(\frac{1}{|B_{2r}|}\int_{B_{2r}}\left|\Delta_{e}^2u(x)\right|^p\,dx \right)^{\frac{1}{p}}  \\
    & \leq C\left(1+\frac{\|e\|}{r}\right)^\frac{n}{p}(1+\|f\|_{L^{\infty}(\mathbb{R}^n)})
\end{split}
\end{equation*}
for some constant $C>0$.
\end{proof}
\bigskip
We will adopt  Caffarelli and Li's \cite{CL04} arguments to carry out the rest proof.
\bigskip
\begin{lem}\label{le6}
For all $e\in E$, we have
$$\sup_{x\in \mathbb{R}^n}\Delta_{e}^2u(x)=\frac{e^{T}Ae}{\|e\|^2}.$$
\end{lem}
\bigskip
To proceed, we choose $b\in \mathbb{R}^n$ such that
$$w(e_k)=w(-e_k), \ k=1,\ldots,n, $$
where $$w(x):=u(x)-\frac{1}{2}x^{T}Ax-b\cdot x$$
satisfies $F(A+D^2w,x)=f$ in $\mathbb{R}^n$.
Since $\bar{F}(A)=\fint_{Q_1}f(x)\,dx$, there exists some $v\in \mathbb{T}$ satisfying
$$F(A+D^2v,x)=f$$
in the  viscosity sense. Next, using the $W^{2,p}$ $(p>n)$ regularity, we conclude that  $h:=w-v$ is of class $W^{2,p}$ and satisfies a linear elliptic equation
 $$a_{ij}(x)D_{ij}h(x)=0,\ \ \ \mbox{a.e.} \ \ x\in \mathbb{R}^n,$$
 where $a_{ij}$ satisfies the elliptic condition with  ellipticity  constants $0<\lambda \leq \Lambda < \infty$.
Consequently, to prove that $h$ is constant, by the harnack inequality, it remains to show that $h$ is bounded from above.
\bigskip
\begin{lem}\label{le7}
$$\sup_{\mathbb{R}^n}h< + \infty.$$
\end{lem}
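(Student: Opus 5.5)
We follow the Caffarelli--Li strategy. The plan is to first show that $h$ has zero second differences along the lattice, then use the normalization of $b$ to conclude that $h$ is periodic, and hence bounded.

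\emph{Step 1: second differences of $w$.} By Lemma \ref{le6}, for every $e\in E$,
$$\Delta_e^2 w(x)=\Delta_e^2u(x)-\frac{e^TAe}{\|e\|^2}\le 0,\qquad \sup_{\mathbb{R}^n}\Delta_e^2 w=0 .$$
Since $v$ is periodic, $\Delta_e^2 v\equiv 0$, so $\Delta_e^2 h=\Delta_e^2 w\le 0$ for all $e\in E$.

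\emph{Step 2: $\Delta^2_e h\equiv 0$.} Fix $e\in E$ and set $g(x)=h(x+e)+h(x-e)-2h(x)\le 0$. Because $h(\cdot\pm e)$ and $h$ solve the same periodic equation $F(A+D^2(\cdot)+D^2v,x)=f$, concavity gives, as in Lemma \ref{le3}, $a_{ij}(x)D_{ij}g\ge 0$ a.e. Here $a_{ij}(x)=F_{ij}(A+D^2w(x),x)$ is measurable and uniformly elliptic. So $-g\ge0$ is a $W^{2,p}$ supersolution, and $\sup g=0$. To conclude $g\equiv 0$ from the Harnack inequality, we argue as in Caffarelli--Li. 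Take points $x_k$ with $g(x_k)\to 0$ and translate them back into $Q_1$ by lattice vectors $z_k$. The translated functions $w(\cdot+z_k)-\text{affine}$ are quasi-solutions of the same equation, by the periodicity (H2) of $F$ and $f$. Using the $W^{2,p}$ and $C^{1,\alpha}$ compactness that we already exploited in Lemma \ref{le5} (this is where condition (\ref{eq46}) enters), we pass to a limit $w_\infty$. Its second difference $g_\infty\le 0$ attains the value $0$ at an interior point. The strong maximum principle then forces $g_\infty\equiv 0$.

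The main obstacle is to transfer this back to $g$ itself. What I would try first is the following. Along the ray $x+te$ the sequence $t\mapsto h(x+te)$, $t\in\mathbb{Z}$, is concave. Quadratic growth, together with the fact that the blow-down of $w$ is $0$ (Lemma \ref{le5} with our choice of $A$, applied along every subsequence via Lemma \ref{le6}), shows that $h=o(|x|^2)$. We then sum: for each $x$,
$$\sum_{t=-N}^{N}(N-|t|)\,g(x+te)=h(x+(N+1)e)+h(x-(N+1)e)-2h(x)+\dots$$
This is $o(N^2)$, while each term is $\le0$. Combining this with the Harnack estimate $-g\le C\inf(-g)$ on unit balls, which is uniform thanks to periodicity, gives $g\equiv0$ for $e=e_k$, and by the same argument for all $e\in E$.

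\emph{Step 3: periodicity and boundedness.} Once $\Delta_{e}^2h\equiv0$, the difference $h(x+e_k)-h(x)$ is $e_k$-periodic, and by $\Delta^2_{e_k\pm e_l}$ it is periodic in every lattice direction. So $h(x+e_k)-h(x)=p_k(x)$ is periodic. Its mean $\fint_{Q_1}p_k$ is a constant $c_k$, and the normalization $w(e_k)=w(-e_k)$ (with $v(e_k)=v(-e_k)$) forces the linear growth in $h$ along $e_k$ to vanish. Equivalently, $h$ differs from the linear function $\sum c_kx_k$ by a periodic function. Such a linear part with $c\neq0$ is excluded by the choice of $b$, and so $h=$ periodic $+$ const. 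In particular $\sup h<\infty$.
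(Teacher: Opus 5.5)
\textbf{Step 2 has a genuine gap, and it is the step that carries the content of the lemma.} You correctly note that $g_\infty\equiv 0$ for a limit of translates says nothing about $g$ itself. Your proposed repair does not work, for two reasons.

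First, $-g\ge 0$ is only a supersolution, $a_{ij}D_{ij}(-g)\le 0$. So only the weak Harnack inequality is available, not $\sup_{B_1(y)}(-g)\le C\inf_{B_1(y)}(-g)$.

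Second, and decisively, even a full Harnack inequality on unit balls would not help. Chaining it along $x+te$ gives at best an exponentially decaying lower bound $-g(x+te)\ge C^{-|t|}(-g)(x)$. Then $\sum_{|t|\le N}(N+1-|t|)(-g)(x+te)=O(N)$, which is fully compatible with the $o(N^2)$ bound you get from $h=o(|x|^2)$.

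In fact every property you invoke holds for $g(x)=-(1+|x|^2)^{(2-n)/2}$ with $a_{ij}=\delta_{ij}$ and $n\ge 3$:
\begin{itemize}
\item it is smooth and subharmonic, with $g<0$ and $\sup g=0$;
\item its weighted ray sums are $O(N\log N)$;
\item $-g$ is comparable to a constant on every unit ball, uniformly in the centre.
\end{itemize}
Yet $g\not\equiv 0$. So sign, subsolution property, $\sup g=0$ and averaged decay along lattice rays cannot force $\Delta_e^2h\equiv 0$. Some further input tying $g$ back to $h$ and to the equation $h$ satisfies is needed, and your proposal does not supply it. Since Step 3 starts from $\Delta_e^2h\equiv 0$, the argument as a whole does not prove the lemma.

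\textbf{Comparison with the paper.} The paper, deferring to Caffarelli--Li (Lemma 2.9), does not try to prove $\Delta_e^2h\equiv0$ first. It proves the upper bound on $h$ directly, building on the one-sided information $\Delta_e^2 w\le 0$ from Lemma \ref{le6} and the choice of $b$ with $w(e_k)=w(-e_k)$. The Harnack inequality is used only afterwards, for the nonnegative \emph{solution} $\sup h-h$ of $a_{ij}D_{ij}h=0$, where the full inequality is legitimate. There, $\Delta_e^2 h\equiv 0$ is a consequence, not an intermediate step.

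\textbf{A smaller, fixable gap in Step 3.} Periodicity of $p_k=h(\cdot+e_k)-h$ does not make $h-\sum_k c_kx_k$ periodic unless $p_k$ is constant: $x_1\sin(2\pi x_2)$ has all lattice second differences zero. Also, your normalization only gives $p_k(0)=0$, not $c_k=0$. The fix: $p_k=w(\cdot+e_k)-w$ is the difference of two solutions of $F(A+D^2\cdot\,,x)=f$. Hence it is a bounded solution of a linear uniformly elliptic equation, hence constant, and then $p_k\equiv p_k(0)=0$.
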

 \bigskip
 Since proofs of the above  two lemmas are the same as  that of  \cite[Proposition 2.3 and Lemma 2.9]{CL04},  the detailed proofs are omitted.

\section{Uniformly elliptic equations on exterior domains}
\noindent

In this section, we establish  the existence and Liouville type results for   quadratic  growth
solutions of uniformly elliptic equations with the periodic data on exterior domains.
 Now we give a proof of Theorem \ref{th10}.
 \bigskip
 \begin{proof}[Proof of Theorem \ref{th10}]
 Without loss of generality, we assume that $\Omega$ contains the origin. Let $r_0=\mbox{daim}\ \Omega+1$.  In view of Theorem \ref{th1}, let $v\in \mathbb{T}$ be a viscosity solution of $F(A+D^2v,x)=f$ in $\mathbb{R}^n$. We set $w:=\frac{1}{2}x^{T}Ax+b\cdot x+v(x)$ and $\bar{C}:=\|w-\varphi\|_{L^{\infty}(\partial \Omega)}$. For $r>r_0$, let $u_r$ be a  viscosity solution of
\begin{equation*}
   \left\{
\begin{aligned}
   &F(D^2u_r,x)=f\ \ \mbox{in}\ \ B_r \backslash \overline{\Omega},\\
&u_r=\varphi\ \  \mbox{on}\ \ \partial \Omega,\\
&u_r=w \ \  \mbox{on}\ \ \partial B_r.
\end{aligned}
\right.
 \end{equation*}
 Clearly, we see that
 \begin{equation*}
   \left\{
\begin{aligned}
   &F(D^2w,x)=f\ \ \mbox{in}\ \ B_r \backslash \overline{\Omega},\\
&w+\bar{C} \geq \varphi\ \  \mbox{on}\ \ \partial \Omega,\\
&w-\bar{C} \leq \varphi\ \  \mbox{on}\ \ \partial \Omega.
\end{aligned}
\right.
 \end{equation*}
 Thus, applying the comparison principle for  strong solutions yields
 $$w-\bar{C}\leq u_r\leq w+\bar{C} \ \ \mbox{in}\ \ B_r \backslash \overline{\Omega}.$$
Hence we can apply the H\"{o}lder estimate  to $F(D^2u_r,x)=f$ in any compact subset $K$ of $\mathbb{R}^n \backslash \overline{\Omega}$ to obtain
$$ \|u_r\|_{C^\alpha(K)}
    \leq  C$$
for some constant $C$ independent of $r$. It follows that there exists a function $u\in C(\mathbb{R}^n \backslash \overline{\Omega})$ and a subsequence of $\{u_r\}_{r=1}^\infty$ that converges uniformly to $u$ in compact sets  of $\mathbb{R}^n \backslash \overline{\Omega}$. Moreover, we have
$$F(D^2u,x)=f\ \ \mbox{in}\ \ \ \mathbb{R}^n \backslash \overline{\Omega}$$
in the viscosity sense with
$$w-\bar{C}\leq u\leq w+\bar{C} \ \ \mbox{in}\ \ \mathbb{R}^n \backslash \overline{\Omega}.$$

We are now in a position to show that $u$ is continuous up to $\partial \Omega$ and coincides with $\varphi$ on $\partial \Omega$. More precisely, for any $x_0\in \partial \Omega$, then we have
$$\lim_{x\in \mathbb{R}^n \backslash \overline{\Omega},\ x\rightarrow x_0}u(x)=\varphi(x_0).$$
To show this, for arbitrary $\epsilon>0$ and fixed $x_0\in \partial \Omega$, by virtue of the continuity of $\varphi$ on $\partial \Omega$, there exists a constant  $0<\delta<1$ such that $|\varphi(x)-\varphi(x_0)|<\epsilon$ if $x \in \partial \Omega\cap B_\delta(x_0)$.
We then define functions $\varphi^{\pm}\in C^{2}(\overline{B_{r_0}\backslash \Omega})$ by
$$\varphi^{\pm}(x)=\varphi(x_0)\pm\left(\epsilon+\frac{2\sup_{\partial \Omega}|\varphi|}{\delta^2}|x-x_0|^2\right).$$
Clearly, we have
\begin{equation*}
   \left\{
\begin{aligned}
&\varphi^{-} \leq \varphi\leq \varphi^{+} \ \  \mbox{on}\ \ \partial \Omega,\\
&\varphi^{-} \leq 0\leq \varphi^{+} \ \  \mbox{on}\ \ \partial B_{r_0}.
\end{aligned}
\right.
 \end{equation*}
 Since the domain $\Omega$  satisfies  a uniform interior sphere condition, we let  $B_{\rho}(z_{x_0})\subset \Omega$ and $\overline{B_{\rho}(z_{x_0})}\cap \partial \Omega={x_0}$.
 Now we consider  functions
 $w^{\pm}\in C^{2}(\overline{B_{r_0}\backslash \Omega})$ defined by
 $$w^{\pm}(x)=\pm \widehat{B}\left(e^{-\widehat{A}\rho^2}-e^{-\widehat{A}|x-z_{x_0}|^2}\right)$$
 for some positive constants $\widehat{A}$ and $\widehat{B}$ to be specified later. For $x\in \overline{B_{r_0}\backslash \Omega}$, choosing large enough $\widehat{A}>\frac{n\Lambda}{2\lambda \rho^2}$ and $\widehat{B}>0$, we obtain
\begin{equation*}
   \begin{split}
       F(D^2w^{+}(x)+D^2\varphi^{+}(x),x)\leq & F(D^2\varphi^{+}(x),x)+\mathcal{M}^{+}(D^2w^{+}(x)) \\
       \leq & F\left(\frac{4\sup_{\partial \Omega}|\varphi|}{\delta^2},x\right)+2e^{-\widehat{A}|x-z_{x_0}|^2}\widehat{B}\widehat{A}(n\Lambda-2\widehat{A}\lambda \rho^2)\\
       \leq & F\left(\frac{4\sup_{\partial \Omega}|\varphi|}{\delta^2},x\right)+2e^{-4\widehat{A}r_0^2}\widehat{B}\widehat{A}(n\Lambda-2\widehat{A}\lambda\rho^2)\\
       \leq & \inf_{\mathbb{R}^n}f,
   \end{split}
 \end{equation*}
 \begin{equation*}
   \begin{split}
      F(D^2w^{-}(x)+D^2\varphi^{-}(x),x)\geq & F(D^2\varphi^{-}(x),x)+\mathcal{M}^{-}(D^2w^{-}(x)) \\
       \geq & F\left(-\frac{4\sup_{\partial \Omega}|\varphi|}{\delta^2},x\right)-2e^{-\widehat{A}|x-z_{x_0}|^2}\widehat{B}\widehat{A}(n\Lambda-2\widehat{A}\lambda\rho^2)\\
       \geq & F\left(-\frac{4\sup_{\partial \Omega}|\varphi|}{\delta^2},x\right)-2e^{-4\widehat{A}r_0^2}\widehat{B}\widehat{A}(n\Lambda-2\widehat{A}\lambda \rho^2)\\
       \geq & \sup_{\mathbb{R}^n}f
   \end{split}
 \end{equation*}
 and
$$\inf_{x\in \partial B_{r_0} }\widehat{B}\left(e^{-\widehat{A}\rho^2}-e^{-\widehat{A}|x-z_{x_0}|^2}\right)\geq \widehat{B}\left(e^{-\widehat{A}\rho^2}-e^{-\widehat{A}(\rho+1)^2}\right)\geq \sup_{\partial B_{r_0}}(|w|+\bar{C}).$$
Clearly, it follows that
$$w^{-}+\varphi^{-}\leq u_r\leq w^{+}+\varphi^{+}\ \  \mbox{on}\ \ \partial \Omega\cup \partial B_{r_0}.$$
Consequently, by the aid of the  comparison principle  for viscosity solutions, we have
 $$w^{-}+\varphi^{-}\leq u_r\leq w^{+}+\varphi^{+}\ \ \mbox{in}\  \ B_{r_0}\backslash \overline{\Omega}.$$
 Furthermore, letting $r\rightarrow \infty$ leads to
$$ w^{-}+\varphi^{-}\leq u\leq w^{+}+\varphi^{+}\ \ \mbox{in}\  \ B_{r_0}\backslash \overline{\Omega},$$
 that is,
 $$ |u(x)-\varphi(x_0)|\leq \epsilon+\frac{2\sup_{\partial \Omega}|\varphi|}{\delta^2}|x-x_0|^2+w^{+}(x),$$
 which immediately implies that $u(x)\rightarrow \varphi(x_0)$ as $x\rightarrow x_0$.

Finally, by the $W^{2, p}$ regularity, $u-w $ is of class $W^{2, p}$ and therefore satisfies a linear elliptic equation. Furthermore, since $u-w$  is bounded in $\mathbb{R}^n \backslash \overline{\Omega}$, we apply the Harnack inequality and the comparison principle to conclude that $\lim_{|x|\rightarrow \infty} (u-w)(x)$ exists.
In particular, if $\frac{\Lambda}{\lambda}< n-1$, we can obtain a more refined error estimate. Indeed, applying the comparison principle to the  Pucci's operators (see \cite[Theorem 1.10]{ASS11} or \cite[Lemma 2.5]{LZP}), we obtain the desired estimate (\ref{eq39}).
 \end{proof}
 \bigskip
 We end up this section with the proof Theorem \ref{th9}.
\bigskip
\begin{proof}[Proof of Theorem \ref{th9}]
For $r>2$, let $u_r$  be a viscosity solution of
\begin{equation*}
   \left\{
\begin{aligned}
   &F(D^2u_r,x)=f\ \ \mbox{in}\ \ B_r,\\
&u_r=u\ \  \mbox{on}\ \ \partial B_r.
\end{aligned}
\right.
 \end{equation*}

 We will show that $u_r$ is  bounded in compact sets  of $\mathbb{R}^n$. For this purpose, we let $\bar{u}\in C^2(\mathbb{R}^n)$ keeping $\bar{u}=u$ outside $B_2$ and
 set $$F(D^2\bar{u},x)=f+g\ \ \ \mbox{in}\ \ \ \mathbb{R}^n,$$
 where $g$ is H\"{o}lder continuous with support in $B_2$.  We  choose $\bar{C}>0$ such that
 $$\bar{C}\frac{1}{4}(\Lambda+\lambda(n-1))\left(1-\frac{\lambda}{\Lambda}(n-1)\right)2^{\frac{1}{2}\left(1-\frac{\lambda}{\Lambda}(n-1)\right)-2}\leq -\|g\|_{L^{\infty}(B_2)}.$$
 Then we consider $$E(x)=\bar{C}|x|^{\frac{1}{2}\left(1-\frac{\lambda}{\Lambda}(n-1)\right)},\ \ \ x\in \mathbb{R}^n\setminus \{0\}.$$
 Since $\bar{u}-u_r$ is bounded in $B_1$, there exists a constant $0<\varepsilon<1$ such that
 $$|\bar{u}(x)-u_r(x)| \leq E(x),\ \ \ x\in \partial B_\varepsilon \cup \partial B_r.$$
 For $x\in B_r\backslash B_\varepsilon$, we obtain
 $$\mathcal{M}^{+}(D^2E(x))\leq g(x)\leq \mathcal{M}^{+}(D^2\bar{u}(x)-D^2u_r(x))$$
 and
 $$\mathcal{M}^{-}(D^2\bar{u}(x)-D^2u_r(x))\leq g(x)\leq \mathcal{M}^{-}(-D^2E(x))$$
 From the comparison principle, it follows that
 $$  \bar{u}(x)-E(x)\leq u_r(x)\leq \bar{u}(x)+E(x),\ \ \ x\in B_r\backslash B_\varepsilon.$$
 Applying the Alexandroff-Bakelman-Pucci estimate to $F(D^2u_r,x)=f$ in $B_2$, we have
 $$\|u_r\|_{L^{\infty}(B_2)}\leq \|u\|_{L^{\infty}(\partial B_2)}+\|E\|_{L^{\infty}(\partial B_2)}+C\|f\|_{L^{\infty}(B_2)},$$
 where $C>0$ depends only on $n$, $\lambda$ and $\Lambda$.
Hence we prove that $u_r$ is  bounded in compact sets  of $\mathbb{R}^n$.
From the H\"{o}lder estimate, it follows that there exists a subsequence of $\{u_r\}_{r=1}^\infty$ that converges uniformly to a function $w\in C(\mathbb{R}^n)$ in compact sets  of $\mathbb{R}^n$.
Moreover, we have
$$F(D^2w,x)=f\ \ \mbox{in}\ \ \ \mathbb{R}^n$$
in the viscosity sense with
\begin{equation}\label{eq45}
  |w(x)-u(x)| \leq E(x),\ \ x\in \mathbb{R}^n \backslash \overline{B_2}
\end{equation}
and therefore
$$|w(x)|\leq C(1+|x|^2), \ \ x\in \mathbb{R}^n$$
for some constant $C>0$. In addition, by Theorem \ref{th2}, we conclude that
$$w(x)=\frac{1}{2}x^{T}Ax+b\cdot x+c+v(x)$$
for  some $A\in \mathcal{S}^{n\times n}$ with $\bar{F}(A)=\fint_{Q_1}f(x)\,dx$, $b\in \mathbb{R}^n$, $c\in \mathbb{R}$ and $v\in \mathbb{T}$.

Finally, due to $F(D^2u,x)=F(D^2w,x)=f$ in $\mathbb{R}^n \backslash \overline{B_1}$ in the viscosity sense and (\ref{eq45}), $u-w$ is a strong solution of a linear elliptic equation and is bounded in $\mathbb{R}^n \backslash \overline{B_1}$.  Following  the arguments in the establishment of the estimate (\ref{eq39}), we obtain
$$|u(x)-w(x)-c^*| \leq C|x|^{1-\frac{\lambda}{\Lambda}(n-1)},\ \ \ x\in \mathbb{R}^n \backslash \overline{B_1}$$
for some $c^*\in \mathbb{R}$ and $C>0$.
\end{proof}

\end{document}